\newcommand{\myhref}[1]{%
	\ifboolexpr{%
		test {\ifhyperref}
		and
		not test {\iftoggle{bbx:eprint}}
		and
		not test {\iftoggle{bbx:url}}
	}
	{\href{\doiorurl}{#1}}
	{#1}%
}
\let\subsectionSymbol\S
\crefname{subsection}{\subsectionSymbol\!}{subsections}
\newtheorem{theorem}{Theorem}
\newtheorem{lemma}[theorem]{Lemma}
\theoremstyle{definition}
\newtheorem{definition}[theorem]{Definition}
\newtheorem{example}[theorem]{Example}
\newtheorem{remark}[theorem]{Remark}
\newtheorem*{notation}{Notation}
\DeclareMathOperator{\bd}{\partial}
\newcommand{\ot}{\otimes}
\DeclareMathOperator{\EZ}{EZ}
\DeclareMathOperator{\AW}{AW}
\newcommand{\id}{\mathsf{id}}
\newcommand{\Med}{\cM}
\newcommand{\N}{\mathbb{N}}
\newcommand{\Z}{\mathbb{Z}}
\renewcommand{\k}{\Bbbk}
\newcommand{\Sym}{\mathbb{S}}
\newcommand{\Ftwo}{{\mathbb{F}_2}}
\newcommand{\Fp}{{\mathbb{F}_p}}
\renewcommand{\th}{\mathrm{th}}
\newcommand{\Hom}{\mathrm{Hom}}
\newcommand{\bars}[1]{\lvert#1\rvert}
\newcommand{\cM}{\mathcal{M}}
\renewcommand{\P}{\mathrm{P}}
\newcommand{\RP}{\mathbb{R}\mathrm{P}}
\newcommand{\ind}{\mathrm{ind}}
\newcommand{\chain}[1]{C_{#1}}
\newcommand{\suspension}{\Sigma}
\newcommand{\sm}{\smallsetminus}
\DeclareMathOperator{\xor}{\triangle}
\DeclareMathOperator{\pos}{pos}
\newcommand{\bu}{{\bar{u}}}
\DeclareMathOperator{\modulo}{mod}
\DeclareMathOperator{\img}{img}
\DeclareMathOperator{\supp}{supp}
\DeclareMathOperator{\domain}{dom}
\newenvironment{tab}{\list{}{\rightmargin 0pt}\item\relax}{\endlist}
\title[new formulas for cup-$i$ products]{New formulas for cup-$i$ products and fast computation of Steenrod squares}
\author{Anibal M. Medina-Mardones}
\address{Max Planck Institute for Mathematics \and University of Notre Dame}
\email{\href{mailto:ammedmar@mpim-bonn.mpg.de}{ammedmar@mpim-bonn.mpg.de}}
\keywords{Computational topology, cohomology operations, Steenrod squares, simplicial complexes, cup-$i$ products}
\begin{document}

\begin{abstract}
	Operations on the cohomology of spaces are important tools enhancing the descriptive power of this computable invariant.
	For cohomology with mod 2 coefficients, Steenrod squares are the most significant of these operations.
	Their effective computation relies on formulas defining a \mbox{cup-$i$} construction, a structure on (co)chains which is important in its own right, having connections to lattice field theory, convex geometry and higher category theory among others.
	In this article we present new formulas defining a \mbox{cup-$i$} construction, and use them to introduce a fast algorithm for the computation of Steenrod squares on the cohomology of finite simplicial complexes.
	In forthcoming work we use these formulas to axiomatically characterize the cup-$i$ construction they define, showing additionally that all other formulas in the literature define the same cup-$i$ construction up to isomorphism.
\end{abstract}
	\maketitle

\section{Introduction}

Discrete models are indispensable for effective computations involving topological spaces.
The category of simplicial complexes provides models not only for spaces but also, through the simplicial approximation theorem, for continuous maps between them.
We can obtain algebraic models from these via simplicial chains and their dual cochains, from which Betti numbers can be readily computed using linear algebra alone.

In this article we focus on finer invariants of spaces enriching their mod 2 cohomology and going beyond Betti numbers.
We are referring to the celebrated Steenrod squares
\begin{equation*}
Sq^k \colon H^\bullet(X; \Ftwo) \to H^\bullet(X; \Ftwo).
\end{equation*}
These operations can be thought of as arising from the broken $\Sym_2$-symmetry of the diagonal map
\begin{equation*}
\begin{tikzcd}[column sep=small, row sep=-3pt]
X \arrow[r] & X \times X \\
x \arrow[r, mapsto] & (x,x)
\end{tikzcd}
\end{equation*}
occurring during the passage from continuous descriptions to discrete/algebraic models.

We mention the following examples to illustrate the additional discriminatory power these operations provide:
\begin{enumerate}
	\item The real projective plane and the wedge of a circle and a sphere have, with $\Ftwo$-coefficients, the same Betti numbers, yet the rank of $Sq^1$ tells them apart.
	\item Similarly, the complex projective plane and the wedge of a 2-sphere and a 4-sphere have the same Betti numbers with any coefficients, yet the rank of $Sq^2$ distinguishes them.
	\item The suspensions of the two spaces above have the same Betti numbers and also isomorphic cohomology rings, yet the rank of $Sq^2$ tells them apart.
\end{enumerate}

For simplicial complexes, effective constructions of Steenrod squares have been known since their introduction in Steenrod's seminal 1947 paper \cite{steenrod1947products}.
They all rely on a \textit{cup-$i$ construction}, a structure on chains given by a collection of natural linear maps
\begin{equation*}
\Delta_i \colon C_\bullet(X; \Ftwo)  \to C_\bullet(X; \Ftwo)^{\ot 2}\,,
\end{equation*}
satisfying for every integer $i$ the following key identity:
\begin{equation*}
(1+T) \Delta_{i-1} =
\partial \circ \Delta_i + \Delta_i \circ \partial,
\end{equation*}
where $T$ denoted the transposition of tensor factors,
and such that $\Delta_0$ is a chain approximation to the diagonal of $X$.
These \textit{cup-$i$ coproducts} and their linear dual \textit{cup-$i$ products} are important in their own right.
For example, they are used to describe action functionals of topological field theories \cite{gaiotto2016spin,kapustin2017fermionic,barkeshli2021classification}, to define the nerve of $n$-categories \cite{medina2020globular}, and their comodules can be used to fully faithfully model chain complex valued presheaves \cite{medina2022assembly} on $X$.

In this article we introduce new formulas defining a cup-$i$ construction on simplicial complexes and simplicial sets, a categorical closure of simplicial complexes used, for example, to define the singular homology of topological spaces.

Several formulas defining cup-$i$ constructions have been given in the literature starting with Steenrod's original \cite{steenrod1947products}.
These include those resulting from the approach of Real \cite{real1996computability} and Gonz\'alez-D\'iaz--Real \cite{gonzalez-diaz1999steenrod, gonzalez2003computation, gonzalez-diaz2005cocyclic} based on the EZ-AW chain contraction, the operadic methods of McClure-Smith \cite{mcclure2003multivariable} and Berger-Fresse \cite{berger2004combinatorial}, and the prop viewpoint of the author \cite{medina2020prop1, medina2021prop2}.
The question of comparing the resulting cup-$i$ constructions will be addressed via an axiomatic characterization in \cite{medina2022axiomatic}, where it is shown that all of these cup-$i$ constructions, including the one given here, are isomorphic and not just homotopic.

We highlight three uses for the formulas introduced in this paper.
1) They are key to prove the axiomatic characterization of Steenrod's cup-$i$ construction.
2) In \cite{cantero-moran2020khovanov}, Cantero-Mor\'an defined Steenrod squares in mod 2 Khovanov homology \cite{khovanov2000categorification} by reinterpreting them in the context of augmented semi-simplicial objects in the Burnside category.
3) They lead to fast computations of Steenrod square as we describe next.

Given a cup-$i$ construction and a finite simplicial complex, a representative of $Sq^k \big( [\alpha] \big)$ for a cocycle $\alpha$ is given by the cocycle $\beta = (\alpha \ot \alpha) \triangle_{i}(-)$ where $i$ is an integer that depends only on the degree of $\alpha$ and $k$.
A direct algorithmic way to compute the support of $\beta$ is to iterate over all simplices $x$ of the appropriate dimension, compute $\triangle_i(x)$, and record $x$ if the value of $(\alpha \ot \alpha)$ on it is $1 \in \Ftwo$.
Our algorithm improves on this scheme by considering only simplices $x$ related to the support of $\alpha$.
More specifically, it constructs the universal support of $\beta$ and then discards simplices in it that are not in $X$.
In this way our algorithm depends primarily on the size of the support of $\alpha$, and is therefore less sensitive to the number of simplices of $X$.

For the effective computation of Steenrod squares on simplicial complexes, an algorithm based on \cite{gonzalez-diaz1999steenrod} was implemented in the open-source mathematics system \verb|SAGE| by John Palmieri \cite{sagemath}.
We present a proof-of-concept performance comparison between a \verb|Python| implementation of our algorithm and the one in \verb|SAGE|.
The speed gained with our algorithm is essential for the incorporation of Steenrod squares into persistence homology \cite{medina2022per_st}, a technique typically used in highly intensive data analysis tasks \cite{carlsson2008images, chan2013viral, lee2017quantifying} and for which various software projects exist \cite{bauer2021ripser, gudhi, medina2021giotto}.
A specific implementation for the computation of Steenrod barcodes based on the algorithms introduced here can be found in the project \texttt{steenroder}\footnote{Currently hosted at \url{https://github.com/Steenroder/steenroder}}.

\subsection*{Outline}

In \cref{s:preliminaries} we review the notions from equivariant homological algebra and simplicial topology needed to present, in \cref{s:squares}, the definitions of cup-$i$ constructions and Steenrod squares.
We introduce our new formulas in \cref{s:formulas} deferring the proof that they define a cup-$i$ construction to \cref{s:proof}.
We present our algorithm in \cref{s:algorithm} and a proof of its correctness in \cref{s:correctness}.
We devote \cref{s:comparison} to a proof-of-concept comparison of our method using \verb|SAGE|.
In \cref{s:outlook} we discuss finer invariants associated to Steenrod squares, and provide conclusions and an outline for future work in \cref{s:conclusion}.

\subsection*{Acknowledgments}

We would like to thank Mark Behrens, Greg Brumfiel, Tim Campion, Federico Cantero-Mor\'an, Roc\'io Gonz\'alez-D\'iaz, Kathryn Hess, Riley Levy, Umberto Lupo, John Morgan, Pedro Real, Stephan Stolz, Dennis Sullivan, and Guillaume Tauzin, for stimulating conversations about this project.
We thank the anonymous referees for several suggestions improving the exposition of this work.

We are grateful for the hospitality of the \textit{Laboratory for Topology and Neuroscience} at EPFL, where part of this work was carried out, and acknowledge partial financial support from \textit{Innosuisse} grant 32875.1 IP-ICT-1.

\section{Preliminaries} \label{s:preliminaries}

In this section we review the basic notions used in this article and set up the conventions we follow.

\subsection{Chain complexes}

We assume familiarity with the notion of chain complex over a ring $\k$.

The \textit{tensor product} $C \ot C^\prime$ of chain complexes $C$ and $C^\prime$ is the chain complex whose degree-$n$ part is
\begin{equation*}
\left(C \ot C^\prime\right)_n = \bigoplus_{i+j=n} C_i \ot C^\prime_j,
\end{equation*}
where $C_i \ot C^\prime_j$ is the tensor product of $\k$-modules, and whose boundary map is defined by
\begin{equation*}
\partial (v \ot w) = \partial v \ot w + (-1)^{|v|} v \ot \partial w.
\end{equation*}

The \textit{hom complex} $\Hom(C, C^\prime)$ is the chain complex whose degree-$n$ part is the subset of linear maps between them that shift degree by $n$, i.e.,
\begin{equation*}
\Hom(C, C^\prime)_n = \{f \mid \forall k \in \Z, f(C_k) \subseteq C^\prime_{k+n}\},
\end{equation*}
and boundary map defined by
\begin{equation*}
\partial f =
\partial_{C^\prime} \circ f - (-1)^{\bars{f}} f \circ \partial_C.
\end{equation*}
Notice that a chain map is the same as a $0$-cycle in this complex, and that two chain maps are chain homotopy equivalent if and only if they are homologous cycles.
We extend this terminology and say that two maps $f, g \in \Hom(C, C^\prime)$ are \textit{homotopic} if their difference is nullhomologous, referring to a map $h \in \Hom(C, C^\prime)$ such that $\partial h = f - g$ as a \textit{homotopy} between $f$ and $g$.

Regarding $\k$ as a chain complex concentrated in degree $0$, the \textit{linear dual} of a chain complex $C$ is the chain complex $\Hom(C, \k)$.
We refer to the contravariant functor $\Hom(-, \k)$ as \textit{linear duality}.

For any three chain complexes, there is a natural isomorphism of chain complexes
\begin{equation} \label{e:adjunction isomorphism}
\Hom(C \ot C^\prime, C^{\prime\prime}) \cong
\Hom(C, \Hom(C^\prime, C^{\prime\prime}))
\end{equation}
referred to as the \textit{adjunction isomorphism}.

\subsection{Group actions}

Symmetries on chain complexes play an important role on this work.
Let $G$ be a finite group.
We will later focus solely on the symmetric group $\Sym_2$.
We denote by $\k[G]$ the group ring of $G$, i.e., the free $\k$-module generated by $G$ together with the ring product defined by linearly extending the product on $G$.
We refer to a chain complex of left $\k[G]$-modules as a chain complex with a $G$-\textit{action} and to $\k[G]$-linear maps as $G$-\textit{equivariant}.

Given a chain complex $C$ with a $G$-action we naturally associate the following two chain complexes.
The subcomplex of \textit{invariant chains} of $C$, denoted $C^G$, contains all elements $c \in C$ satisfying $g \cdot c = c$ for every $g \in G$.
The quotient complex of \textit{coinvariant chains} of $C$, denoted $C_G$, is the chain complex obtained by identifying elements $c, c^\prime \in C$ if there exists $g \in G$ such that $c^\prime = g \cdot c$.

Let $C$ and $C^\prime$ be chain complexes and assume $C$ has a $G$-action.
The chain complex $\Hom(C, C^\prime)$ has a $G$-action induced from $(g \cdot f)(c) = f(g^{-1} \cdot c)$ and there is an isomorphism
\begin{equation} \label{e:invariant hom iso hom coinvariants}
\Hom(C, C^\prime)^G \cong \Hom(C_G, C^\prime).
\end{equation}

\subsection{Simplicial topology}

Simplicial complexes are used to combinatorially encode the topology of spaces.
An \textit{abstract and ordered simplicial complex}, or a \textit{simplicial complex} for short, is a pair $(V, X)$ with $V$ a poset and $X$ a set of subsets of $V$ such that:
\begin{enumerate}
	\item The restriction of the partial order of $V$ to any element in $X$ defines a total order on it.
	\item For every $v$ in $V$, the singleton $\{v\}$ is in $X$.
	\item If $x$ is in $X$ and $y$ is a subset of $x$, then $y$ is in $X$.
\end{enumerate}
We abuse notation and denote the pair $(V, X)$ simply by $X$ referring to $V$ as its poset of vertices.

The elements of $X$ are called \textit{simplices} and the \textit{dimension} of a simplex $x$ is defined by subtracting $1$ from the number of vertices it contains.
Simplices of dimension $n$ are called $n$-simplices and are denoted by their order set of vertices $[v_0, \dots, v_n]$.
The collection of $n$-simplices of $X$ is denoted $X_n$.
There are natural maps $d_i^n \colon X_n \to X_{n-1}$ for $i \in \{0, \dots, n\}$ defined by
\begin{equation*}
d_i^n \big( [v_0, \dots, v_n] \big) =
[v_0, \dots, \widehat{v}_i, \dots, v_n]
\end{equation*}
and referred to as the $i^\th$ face map in dimension $n$.
These satisfy the \textit{simplicial relation}:
\begin{equation} \label{e:simplicial relation}
d_i^{n-1} d^n_j = d_{j-1}^{n-1} d_i^n
\end{equation}
for any $0 \leq i < j \leq n$.
We will omit the superscripts of these maps when no confusion arises from doing so.

A \textit{simplicial map} $X \to X^\prime$ is a morphisms between their posets of vertices $f \colon V \to V^\prime$ sending simplices to simplices, i.e., satisfying that if $[v_0, \dots, v_n] \in X$ then the set $\{f(v_0), \dots, f(v_n)\}$ defines a simplex in $X^\prime$.

Let $X$ be simplicial complex.
The degree-$n$ part of the chain complex of \textit{chains} of $X$ is defined by
\begin{equation*}
C_n(X; \k) = \k \big\{ X_n \big\},
\end{equation*}
i.e., the $\k$-module freely generated by the $n$-dimensional simplices of $X$.
The \mbox{degree-$n$} part of the boundary map $\bd$ is the linear map defined on simplices by
\begin{equation*}
\begin{tikzcd}[column sep=normal, row sep=tiny,row sep = 0ex
,/tikz/column 1/.append style={anchor=base east}
,/tikz/column 2/.append style={anchor=base west}
]
C_n(X; \k) \arrow[r, "\partial_n"] & C_{n-1}(X; \k) \\
x\ \arrow[r, |->] & \sum_{i=0}^{n} (-1)^i d_i (x).
\end{tikzcd}
\end{equation*}

Given a simplicial map $f \colon X \to X^\prime$, the \textit{induced chain map} $f_\bullet \colon C_\bullet(X; \k) \to C_\bullet(X^\prime; \k)$ is defined on simplices by $f_\bullet([v_0, \dots, v_n]) = [f(v_0), \dots, f(v_n)]$ if $i \neq j$ implies $ f(v_i) \neq f(v_j)$ and it is $0$ otherwise.

We refer to
\begin{equation*}
C^\bullet(X; \k) = \Hom \big( C_\bullet(X; \k), \k \big)
\end{equation*}
as the \textit{cochains} of $X$ and to the dual $\delta^{n}$ of $\partial_{-n}$ as the $n^\th$ coboundary map.
Furthermore, we denote the linear dual of the map $f_\bullet$ induced by a simplicial map $f$ by $f^\bullet$.
We remark that $C_n(X; \k) = 0$ for $n < 0$ and $C^n(X;\k) = 0$ for $n > 0$.
Elements in the kernel of $\delta_n$ are called \textit{cocycles} and those in the image of $\delta_{n+1}$ \textit{coboundaries}.
The $n^\th$-\textit{cohomology} $H^n(X; \k)$ of $X$ is the quotient $\ker \delta_n / \img \delta_{n+1}$.
We denote by $[\alpha]$ the cohomology class represented by a cocycle $\alpha$.

We will abuse notation and identify simplices in $X_n$ with their associated basis elements in $\chain{n}(X; \k)$.
When $X$ and $\k$ are clear from the context we will omit them from the notation.

\section{Cup-\texorpdfstring{$i$}{i} constructions and Steenrod squares} \label{s:squares}

Let $\Ftwo$ be the field with two elements and $\Sym_2$ the group with only one non-identity element $T$.
In this section we define for any simplicial complex $X$ and every integer $k$ the $k^\th$ Steenrod square
\begin{equation*}
Sq^k \colon H^\bullet(X; \Ftwo) \to H^{\bullet}(X; \Ftwo)
\end{equation*}
using an arbitrary cup-$i$ construction.

\subsection{Cup-$i$ constructions}

Consider the chain complex
\begin{equation*}
\begin{tikzcd}[column sep=normal]
& W =  \Ftwo[\Sym_2]\{e_0\} & \arrow[l, "1+T"'] \Ftwo[\Sym_2]\{e_1\} & \arrow[l, "1+T"']
\Ftwo[\Sym_2]\{e_2\} & \arrow[l, "1+T"'] \cdots
\end{tikzcd}
\end{equation*}
with its natural $\Sym_2$-action.
For any simplicial complex $X$, the chain complex $W \ot C_\bullet(X; \Ftwo)$ has an $\Sym_2$-action concentrated on the left factor, and $C_\bullet(X; \Ftwo)^{\ot 2}$ has one given by transposition of factors.

We are interested in $\Sym_2$-equivariant chain maps
\begin{equation} \label{e:steenrod diagonal}
\triangle_X \colon W \ot C_\bullet(X; \Ftwo) \to C_\bullet(X; \Ftwo)^{\ot 2}
\end{equation}
defined naturally for every simplicial complex $X$, i.e., such that $\triangle_Y \circ (\id_W \ot f_\bullet) = (f_\bullet \ot f_\bullet) \circ \triangle_X$ for any simplicial map $f \colon X \to Y$.

\begin{definition}
	A (non-degenerate) \textit{cup-$i$ construction} is a natural collection of maps as above such that $\triangle_X \neq 0$ if $X$ is a simplicial complex with a single vertex.
\end{definition}

A cup-$i$ construction is determined by a collection $\{\triangle_i\}_{i \in \Z}$ of natural linear maps $C_\bullet \to C_\bullet^{\ot 2}$ satisfying $\triangle_0 \big([v]\big) \neq 0$ for any vertex $v$ and
\begin{equation} \label{e:boundary of cup-i}
(1 + T) \triangle_{i-1} = \partial \circ \triangle_i + \triangle_i \circ \partial
\end{equation}
for any $i \in \Z$.
The correspondence is given by $\triangle_i = \triangle(e_i \ot -)$, and we refer to the map $\triangle_i$ as the \textit{cup-$i$ coproduct} of the cup-$i$ construction, and to the linear dual $\smallsmile_i$ of $\triangle_i$ as its \textit{cup-$i$ product}.
Explicitly, given two cochains $\alpha$ and $\beta$ and a chain $c$ we have
\begin{equation*}
(\alpha \smallsmile_i \beta)(c) = (\alpha \ot \beta) \triangle_i(c).
\end{equation*}

\subsection{Steenrod squares}

Let us consider a cup-$i$ construction $W \ot C_\bullet \to C_\bullet^{\ot 2}$.
Using the linear duality functor and passing to fix points it gives a chain map
\begin{equation*}
\begin{tikzcd}
\Hom\left(C_\bullet \ot C_\bullet, \Ftwo \right)^{\Sym_2} \arrow[r] &
\Hom\left(W \ot C_\bullet, \Ftwo \right)^{\Sym_2},
\end{tikzcd}
\end{equation*}
which we can complete, using isomorphisms \eqref{e:adjunction isomorphism} and \eqref{e:invariant hom iso hom coinvariants} of \cref{s:preliminaries}, to a commutative diagram
\begin{equation*}
\begin{tikzcd}
\Hom\left(C_\bullet \ot C_\bullet, \Ftwo \right)^{\Sym_2} \arrow[r] &
\Hom\left(W \ot C_\bullet, \Ftwo \right)^{\Sym_2} \arrow[d] \\
\left(C^\bullet \ot C^\bullet\right)^{\Sym_2} \arrow[u]&
\Hom\left(W_{\Sym_2} \ot C_\bullet, \Ftwo \right) \arrow[d] \\
C^\bullet \arrow[u, "doubleing"] \arrow[r, dashed]&
\Hom\left(W_{\Sym_2}, C^\bullet\right),
\end{tikzcd}
\end{equation*}
where the choice of coefficients ensures that the \textit{doubleing map} $\alpha \mapsto \alpha \ot \alpha$ is linear.
Using the adjunction isomorphism, the dashed arrow defines a linear map
\begin{equation} \label{e:Steenrod squares parameterized}
\begin{tikzcd}[row sep=0pt, column sep = small]
C^\bullet \ot W_{\Sym_2} \arrow[r] &[-10pt] C^\bullet \\
\alpha \ot e_i \arrow[r, |->] & (\alpha \ot \alpha)\triangle_i(-)
\end{tikzcd}
\end{equation}
descending to mod $2$ cohomology.
As described below, the Steenrod squares are defined by reindexing this map.
\begin{definition} \label{d:steenrod squares}
	The \textit{$k^\th$ Steenrod square} is defined by
	\begin{equation} \label{e:steenrod squares}
	\begin{tikzcd}[row sep=0pt, column sep=tiny]
	Sq^k \colon H^{-n} \arrow[r] & H^{-n-k} \\
	\phantom{Sq^k \colon}{[\alpha]} \arrow[r, |->] & \big[ (\alpha \ot \alpha)\triangle_{n-k}(-) \big].
	\end{tikzcd}
	\end{equation}
	for any cup-$i$ construction $\triangle$.
\end{definition}

\subsection{Additional comments}

\begin{remark}[Simplicial sets]
	For the interested reader we mention that a cup-$i$ construction also defines, through a well known categorical construction, natural cup-$i$ coproducts on the chains of simplicial sets \cite{friedman2012simplicial} and, consequently, Steenrod squares in their mod 2 cohomology.
\end{remark}

\begin{remark}[Cup product] \label{r:cup product}
	Although in this article we do not use the algebra structure on the mod~2 cohomology of spaces, we remark that the cup-$0$ product of a cup-$i$ construction represents the \textit{cup product} in cohomology.
	Explicitly, if $[\alpha], [\beta] \in H^\bullet$ then $[\alpha][\beta] = [\alpha \smallsmile_0 \beta]$, in particular, if $[\alpha]$ is of degree $-k$ then $Sq^k\big([\alpha]\big) = [\alpha] [\alpha]$, which motivates the term squares in the name of the $Sq^k$ operations.
\end{remark}

\begin{remark}[Transverse intersections]
	From a geometric viewpoint, the cup product can be interpreted in terms of intersections of cycles in certain cases.
	For any space, Thom showed that every mod $2$ homology class is represented by the push-forward of the fundamental class of a closed manifold $W$ along some map to the space.
	Furthermore, if the target $M$ is a closed manifold, and therefore satisfies Poincar\'{e} duality
	\[
	PD \colon H^k(M ;\Ftwo) \to H_{\bars{M}-k}(M; \Ftwo),
	\]
	the cohomology class dual to the homology class represented by the intersection of two transverse maps $V \to M$ and $W \to M$, or more precisely their pull-back $W \times_M V \to M$, is the cohomology class $[\alpha] [\beta]$ where $[\alpha]$ and $[\beta]$ are respectively dual to the homology classes represented by $V \to M$ and $W \to M$.
	By taking $[\alpha] = [\beta]$ we have that $Sq^k \big( [\alpha] \big)$ with $\alpha$ of degree $-k$ is represented by the transverse self-intersection of $W \to M$, that is, the intersection of this map and a generic perturbation of itself.
	In manifold topology, the relationship at the (co)homology level between cup product and intersection is classical.
	For a comparison between these at the level of (co)chain see \cite{medina2021flowing}.
	A generalization of this result to cup-$i$ products is the focus of current research.
\end{remark}

\begin{remark}[Odd primes]
	For the reader familiar with group homology, we remark that Steenrod squares are parameterized by classes on the mod $2$ homology of $\Sym_2$.
	Steenrod used this group homology viewpoint to non-constructively define operations on the mod $p$ cohomology of spaces \cite{steenrod1952reduced, steenrod1953cyclic, steenrod1962cohomology} for any prime $p$.
	To define these constructively, analogues of explicit cup-$i$ coproducts for odd primes were introduced in \cite{medina2021may_st} using May's operadic viewpoint \cite{may1970general} and implemented in the computer algebra system \texttt{ComCH} \cite{medina2021comch}.
\end{remark}

\section{New formulas for cup-\texorpdfstring{$i$}{i} products} \label{s:formulas}

In this section we introduce formulas which we show to define a cup-$i$ construction in \cref{s:proof}.
To the best of our knowledge these are new expressions.
In forthcoming work \cite{medina2022axiomatic} we prove that the resulting cup-$i$ construction agrees up to isomorphism with Steenrod's original and all other cup-$i$ constructions in the literature.

\begin{notation}
	Let $X$ be a simplicial complex and $x \in X_n$.
	For a set
	\begin{equation*}
	U = \{u_1 < \dots < u_r\} \subseteq \{0, \dots, n\}
	\end{equation*}
	we write $d_U(x) = d_{u_1}\! \dotsm \, d_{u_r}(x)$, with $d_{\emptyset}(x) = x$.
\end{notation}

\begin{definition} \label{d:cup-i coproducts}
	For any simplicial complex $X$ and integer $i$
	\begin{equation*}
	\Delta_i \colon C_\bullet(X; \Ftwo) \to C_\bullet(X; \Ftwo) \ot C_\bullet(X; \Ftwo)
	\end{equation*}
	is the linear map defined on a simplex $x \in X_n$ to be $0$ if $i \not\in \{0, \dots, n\}$ and is otherwise given by
	\begin{equation} \label{e:new formulas}
	\Delta_i(x) = \sum d_{U^0}(x) \ot d_{U^1}(x)
	\end{equation}
	where the sum is taken over all subsets $U = \{u_1 < \cdots < u_{n-i}\} \subseteq \{0, \dots, n\}$ and
	\begin{equation} \label{e:partition subsets}
	U^0 = \{u_j \in U\mid u_j \equiv j \text{ mod } 2\}, \qquad
	U^1 = \{u_j \in U\mid u_j \not\equiv j \text{ mod } 2\}.
	\end{equation}
\end{definition}

\begin{example} \label{ex:alexander-whitney diagonal}
	For any $x \in X_n$ and $i = 0$ our formulas give
	\begin{equation*}
	\Delta_0(x) = \sum_{j=0}^n d_{j+1} \cdots d_{n}(x) \ot d_{0} \cdots d_{j-1}(x),
	\end{equation*}
	a map known as \textit{Alexander--Whitney diagonal} and widely used to define the algebra structure on cohomology (\cref{r:cup product}).
\end{example}

\begin{example} \label{ex:Sq0 is the identity}
	For any simplex $x \in X_n$ our formulas give
	\begin{equation*}
	\Delta_n(x) = x \ot x,
	\end{equation*}
	implying, after \cref{t:main} below, the well known fact that $Sq^0$ is the identity.
\end{example}

\begin{theorem} \label{t:main}
	The maps introduced in \cref{d:cup-i coproducts} define a cup-$i$ construction.
\end{theorem}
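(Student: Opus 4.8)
The plan is to verify the three requirements of a cup-$i$ construction in turn, the first two being immediate and the third being the combinatorial heart of the matter. Naturality is built into \eqref{e:new formulas}: since $\Delta_i$ is a fixed $\Ftwo$-linear combination of composites of face maps, and face maps commute with the chain maps induced by simplicial maps, the family $\{\Delta_i\}$ is automatically natural. Non-degeneracy is \cref{ex:alexander-whitney diagonal} read at $n=0$: the only index set of size $n-i=0$ is empty, so $\Delta_0([v]) = [v]\ot[v] \neq 0$. It therefore remains to establish the cup-$i$ relation \eqref{e:boundary of cup-i}, which over $\Ftwo$ (all signs vanishing) reads $\partial\Delta_i(x) + \Delta_i(\partial x) = (1+T)\Delta_{i-1}(x)$ for every $x \in X_n$ and every $i$.

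I would prove this by expanding all three expressions as $\Ftwo$-sums of elementary tensors $d_A(x)\ot d_B(x)$ indexed by pairs of subsets $A,B \subseteq \{0,\dots,n\}$, and comparing coefficients. Writing $\partial\bigl(d_{U^0}(x)\bigr) = \sum_{c\notin U^0} d_{U^0\cup\{c\}}(x)$, one finds that each term of $\partial\Delta_i(x)$ has the form $d_{U^0\cup\{c\}}(x)\ot d_{U^1}(x)$ or $d_{U^0}(x)\ot d_{U^1\cup\{c\}}(x)$ with $|U|=n-i$; such a term is \emph{disjoint} ($A\cap B=\emptyset$) when the freshly removed index $c$ lies outside $U$, and \emph{overlapping} ($|A\cap B|=1$) when $c$ already belongs to the opposite block. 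By contrast, every term of $\Delta_i(\partial x)$ is overlapping with $A\cap B=\{k\}$, the index $k$ deleted by $\partial$ being absent from both factors, while every term of $(1+T)\Delta_{i-1}(x)$ is disjoint. Because disjoint and overlapping tensors never coincide, the relation splits into two independent matchings: the overlapping part of $\partial\Delta_i$ must equal $\Delta_i\partial$, and the disjoint part of $\partial\Delta_i$ must equal $(1+T)\Delta_{i-1}$.

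Each matching rests on understanding how the parity partition $U\mapsto(U^0,U^1)$ of \eqref{e:partition subsets} behaves when a single index is removed, and these behaviors differ in the two settings. For the overlapping matching the relevant deletion is the reindexing deletion coming from $\partial x$: writing $\phi_k$ for the order-preserving insertion $\{0,\dots,n-1\}\hookrightarrow\{0,\dots,n\}$ that skips $k$, the key lemma is that, for any $S\subseteq\{0,\dots,n\}$ containing $k$, the canonical partition of $\phi_k^{-1}(S\setminus\{k\})$ is exactly the $\phi_k$-preimage of the canonical partition of $S$; the downward shift of the indices above $k$ compensates precisely for the change of position, so parities are preserved. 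This identifies the overlapping terms of $\partial\Delta_i$ with those of $\Delta_i\partial$ monomial by monomial. For the disjoint matching the relevant deletion is non-reindexing, since the extra face comes from the outer $\partial$ applied to $\Delta_i(x)$, and here removing the $l$-th element of $W=A\sqcup B$ leaves the partition unchanged below position $l$ and \emph{flips} it above position $l$. Encoding the left block $A$ as a binary word and comparing it with the canonical word determined by $W^0$, the number of indices $c$ producing a given disjoint tensor turns out to be even unless $A$ agrees with $W^0$ either everywhere or nowhere, that is, unless $(A,B)$ is the canonical pair $(W^0,W^1)$ or its transpose; all non-canonical splits then cancel in pairs, leaving exactly $(1+T)\Delta_{i-1}(x)$.

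I expect the main obstacle to be precisely this bookkeeping of the parity partition under the two kinds of single-index deletion, and in particular keeping the two conventions straight: the same rule \eqref{e:partition subsets} transforms by ``preserve parity'' in the reindexing case and by ``flip above the cut'' in the non-reindexing case, and the whole argument hinges on applying the correct one on each side. Once the two parity lemmas are in place, the overlapping matching is a direct bijection and the disjoint matching is an elementary suffix-cancellation count, so the remaining verification---including the degenerate ranges $i\notin\{0,\dots,n\}$, where the relevant index-set sums are empty and both sides vanish---is routine.
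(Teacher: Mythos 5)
Your handling of the relation \eqref{e:boundary of cup-i} is essentially correct, but your naturality argument contains a genuine gap. The premise that ``face maps commute with the chain maps induced by simplicial maps'' is false: simplicial maps here are allowed to collapse vertices, and $f_\bullet$ is defined to be zero on any simplex whose vertices are not mapped injectively. For the collapse $f$ of an edge $x=[v_0,v_1]$ to a single vertex $[w]$ one has $d_0 f_\bullet(x)=0$ while $f_\bullet d_0(x)=[w]\neq 0$, so $d_0\circ f_\bullet\neq f_\bullet\circ d_0$. Hence naturality is automatic only for maps injective on the vertices of each simplex; the collapsing case requires a real argument, which is exactly what the paper's \cref{l:naturality} supplies: if $f(v_j)=f(v_{j+1})$, then for every $U$ either $d_{U^0}(x)$ or $d_{U^1}(x)$ retains both $v_j$ and $v_{j+1}$, because when $j$ and $j+1$ both lie in $U$ they occupy consecutive positions and so have equal index parity, landing in the same block $U^\varepsilon$ and therefore both surviving in the opposite factor, which $f_\bullet$ then kills. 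This is the same parity bookkeeping you use elsewhere, so the fix is short, but as written your proposal rests on a false commutation and omits this step.

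The remainder of your plan is sound and is, in substance, the paper's proof with different bookkeeping. Your overlapping/disjoint dichotomy and the identification of the overlapping part of $\partial\Delta_i$ with $\Delta_i\partial$ via the parity-preserving reindexing lemma is precisely the bijection $S_1\to S_2$ of \cref{l:pigeon hole} combined with \cref{l:partial dU = dxU}; your ``unchanged below the cut, flipped above'' rule together with the count that a non-canonical splitting of $W$ arises from an even number (in fact $0$ or $2$) of deletions is the same cancellation that \cref{l:big lemma} organizes through the four identities \eqref{e:big lemma four identities} -- your fiber-counting over binary words is arguably cleaner than the paper's pairing of index sets, but the combinatorial content is identical. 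Two small caveats on the ``routine'' cases: $i=n+1$ is not an empty-sum case, since the right-hand side is $(1+T)(x\ot x)$, which vanishes only because coefficients are mod $2$; and for $i=0$ the surviving canonical disjoint terms involve the empty face, i.e.\ lie in $C_{-1}=0$, which is how your scheme recovers that $\Delta_0$ is a chain map.
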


\begin{remark}
	Two \mbox{cup-$i$} constructions, say $\triangle$ and $\triangle^\prime$, are \textit{isomorphic} if there is an automorphism $\phi$ of $W$ making the following diagram commute:
	\[
	\begin{tikzcd} [column sep = 0, row sep=normal]
	W \ot C_\bullet \arrow[rr, "\phi \ot \id"] \arrow[rd, in=180, out=-90, "\triangle^{\phantom{\prime}}"', near start] & &
	W \ot C_\bullet \arrow[ld, in=0, out=-90, "\triangle^\prime", near start] \\
	& C_\bullet \ot C_\bullet & .
	\end{tikzcd}
	\]
	The cup-$i$ products of Steenrod seem to be combinatorially fundamental.
	In forthcoming work \cite{medina2022axiomatic} that depends on \cref{t:main} we show, through an axiomatic characterization, that all known cup-$i$ constructions on simplicial chains are isomorphic -- and not just homotopic -- to the one introduced here.
	These constructions are: Steenrod's original \cite{steenrod1947products}, the one obtained using the $\EZ$-$\AW$ contraction \cite{real1996computability, gonzalez-diaz1999steenrod}, those from combinatorial operads \cite{mcclure2003multivariable, berger2004combinatorial}, and the one defined by the $\Med$-bialgebra structure on standard simplices \cite{medina2020prop1, medina2021prop2}.
	Furthermore, this cup-$i$ construction defines naturally another fundamental construction: the nerve of higher categories \cite{street1987orientals, medina2020globular}.
\end{remark}

In order to prove \cref{t:main} we need to check that each $\Delta_i$ is natural and satisfies \eqref{e:boundary of cup-i} -- \cref{ex:Sq0 is the identity} implies the non-degeneracy condition.
We state these claims as two lemmas.

\begin{lemma} \label{l:naturality}
	For any simplicial map $f$ and integer $i$ we have
	\begin{equation*}
	\Delta_i \circ f_\bullet = (f_\bullet \ot f_\bullet) \circ \Delta_i.
	\end{equation*}
\end{lemma}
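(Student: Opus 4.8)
The plan is to reduce naturality to the behavior of face maps under a simplicial map $f$, exploiting that both sides of the claimed identity are linear and that $\Delta_i$ is defined by an explicit formula involving compositions of face maps. Since all maps in sight are linear, it suffices to verify the identity on a single basis simplex $x = [v_0, \dots, v_n] \in X_n$. The essential point is that the face maps $d_U$ appearing in \eqref{e:new formulas} are the \emph{same} combinatorial operations regardless of the simplicial complex: for any $U$, the operator $d_U$ selects a sub-simplex by deleting the vertices indexed by $U$.

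First I would treat the generic case where $f$ is injective on the vertex set $\{v_0, \dots, v_n\}$, so that $f_\bullet(x) = [f(v_0), \dots, f(v_n)]$ is an honest $n$-simplex. Here the order-preserving nature of simplicial maps is what makes everything work: deleting the vertices indexed by $U$ from $x$ and then applying $f_\bullet$ gives the same ordered simplex as applying $f_\bullet$ first and then deleting the vertices in the corresponding positions, precisely because $f$ respects the total order on the simplex. Formally this is the relation $f_\bullet \circ d_U = d_U \circ f_\bullet$ when $f_\bullet$ is nonzero on the relevant faces, which follows directly from the definition of the induced chain map and the definition of the $d_i$. Since $U^0$ and $U^1$ in \eqref{e:partition subsets} depend only on the combinatorial data of the index set $U$ and not on $X$, applying $f_\bullet \ot f_\bullet$ term-by-term to $\Delta_i(x)$ yields exactly $\Delta_i(f_\bullet(x))$.

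The main obstacle, and the only genuinely delicate point, is the \textbf{degenerate case}: when $f$ identifies two or more vertices of $x$, so that $f_\bullet(x) = 0$ by the definition of the induced chain map. Then $\Delta_i(f_\bullet(x)) = 0$ trivially, and I must show the left-hand side $(f_\bullet \ot f_\bullet)\Delta_i(x)$ also vanishes. This does \emph{not} follow term-by-term, because individual summands $d_{U^0}(x) \ot d_{U^1}(x)$ need not be sent to zero — it can happen that $f$ collapses a pair of vertices $v_a, v_b$ with $a \in U^0$ (or $U$) on one side so that one factor survives. The correct strategy here is an involution/cancellation argument: I would show that the surviving terms pair up and cancel in $\Ftwo$. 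Concretely, if $f(v_a) = f(v_b)$ with $a < b$ adjacent among the non-collapsed indices, one builds a fixed-point-free involution on the index sets $U$ whose surviving images match up, sending each nonzero summand to an equal nonzero summand, so that they cancel modulo $2$.

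Thus the proof splits cleanly into two regimes. The injective regime is a routine verification that $\Delta_i$ is built from face maps and that face maps commute with order-preserving relabelings. The degenerate regime requires the pairing argument, which I expect to be the technical heart; I would isolate a single collapsed adjacent pair of vertices, analyze how the partition \eqref{e:partition subsets} interacts with inserting or deleting indices near that pair, and exhibit the matching of summands explicitly. Because we work over $\Ftwo$, no signs need to be tracked, which considerably simplifies the bookkeeping of this cancellation.
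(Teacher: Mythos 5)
Your reduction to a single basis simplex and your treatment of the injective case are correct and coincide with the paper's proof. The gap is in the degenerate case, and it is twofold. First, your diagnosis of that case is wrong: term-by-term vanishing does \emph{not} fail. Because a simplicial map is order-preserving on the vertices of $x$, the condition $f_\bullet(x)=0$ forces a \emph{consecutive} pair $v_j, v_{j+1}$ with $f(v_j)=f(v_{j+1})$ (collapsing any pair collapses everything between it). Now fix any $U \in \P_{n-i}(n)$. If $U$ does not contain both $j$ and $j+1$, then one of $U^0, U^1$ contains neither index, so the corresponding face $d_{U^0}(x)$ or $d_{U^1}(x)$ retains both $v_j$ and $v_{j+1}$. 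If $U$ contains both, then $j$ and $j+1$ occupy consecutive positions in $U$, say $u_l = j$ and $u_{l+1} = j+1$, so by \eqref{e:partition subsets} their index parities $j+l$ and $(j+1)+(l+1)$ agree: both land in the same part $U^{\varepsilon}$, and hence $d_{U^{1-\varepsilon}}(x)$ again retains both $v_j$ and $v_{j+1}$. In every case one tensor factor of $d_{U^0}(x) \ot d_{U^1}(x)$ contains the collapsed pair and is therefore annihilated by $f_\bullet$; since a summand survives $f_\bullet \ot f_\bullet$ only if \emph{both} factors survive, every summand dies individually and no cancellation is needed. The scenario you describe, a collapsed pair split between $U^0$ and $U^1$, can only happen for a nonconsecutive pair; but whenever $f$ collapses a nonconsecutive pair it also collapses a consecutive one, and that is the pair to run the argument with.

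Second, even granting your premise, the proof is incomplete exactly at the step you call the technical heart: the fixed-point-free involution that is supposed to produce the mod $2$ cancellation is never constructed, only announced. Since in fact there are no surviving summands to pair up, the plan as stated cannot be carried out in a meaningful way. The repair is not an involution but the parity observation above, which is precisely how the paper argues.
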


\begin{proof}
	Consider a simplex $x = [v_0, \dots, v_n]$ and let $i \in \{0, \dots, n\}$, otherwise the identity holds trivially.
	First assume that $f_\bullet(x)$ is not $0$.
	Then, for any proper subset $U \subsetneq \{0, \dots, n\}$ the image of $d_U(x)$ is not $0$ as well and we have
	\begin{align*}
	\Delta_i \circ f_\bullet(x) =\ &
	\Delta_i \big([f(v_0), \dots, f(v_n)]\big) \\ =\ &
	\sum d_{U^0} \big([f(v_0), \dots, f(v_n)]\big) \ot d_{U^1} \big([f(v_0), \dots, f(v_n)]\big) \\ =\ &
	(f_\bullet \ot f_\bullet) \sum d_{U^0} \big([v_0, \dots, v_n]\big) \ot d_{U^1} \big([v_0, \dots, v_n]\big) \\ =\ &
	(f_\bullet \ot f_\bullet) \circ \Delta_i(x).
	\end{align*}
	If $f_\bullet(x) = 0$ then there exists consecutive elements $v_j$ and $v_{j+1}$ with $f(v_j) = f(v_{j+1})$.
	To prove that $(f_\bullet \ot f_\bullet) \circ \Delta_i(x) = 0$ it suffices to show that for any $U \in \P_{n-i}(n)$ either the simplex $d_{U^0}(x)$ or $d_{U^1}(x)$ contains both $v_j$ and $v_{j+1}$.
	If $U$ does not contain both $j$ and $j+1$ this is immediate.
	If it does, we have that $j, j+1 \in U^0$ or $j, j+1 \in U^1$ since they are consecutive implying $v_j, v_{j+1} \in d_{U^1}(x)$ in the first case and $v_j, v_{j+1} \in d_{U^0}(x)$ in the second.
\end{proof}

\begin{lemma} \label{l:main}
	For any integer $i$ we have
	\begin{equation*}
	\partial \circ \Delta_{i} + \Delta_{i+1} \circ \partial = (1+T) \Delta_{i-1}.
	\end{equation*}
\end{lemma}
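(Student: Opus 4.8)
The plan is to verify the cup-$i$ boundary identity $\partial\circ\Delta_i+\Delta_i\circ\partial=(1+T)\Delta_{i-1}$ of \eqref{e:boundary of cup-i} by a direct coefficient computation, after two simplifications. First, since every map in sight is natural and linear, \cref{l:naturality} lets me reduce to a single generating simplex: it suffices to check the identity on the fundamental simplex $\iota_n=[0,\dots,n]$ of the standard simplex $\Delta^n$. There the iterated faces $\{d_A(\iota_n) : A\subseteq\{0,\dots,n\},\ |A|\le n\}$ are pairwise distinct nondegenerate simplices (a face is recorded by its set of surviving vertices), and $d_A(\iota_n)=0$ once $|A|=n+1$; hence $\{d_A(\iota_n)\ot d_B(\iota_n)\}$ is linearly independent and I may compare the coefficient of each tensor $d_A\ot d_B$ separately. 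The cases $i\notin\{0,\dots,n\}$ are immediate from the vanishing conventions, so I assume $0\le i\le n$.

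Second, I will rewrite the three terms uniformly. Over $\Ftwo$ the simplicial identities \eqref{e:simplicial relation} give the compression formulas $\partial\,d_U=\sum_{p\notin U} d_{U\cup\{p\}}$ and $d_W\,d_k=d_{\psi_k(W)\cup\{k\}}$, where $\psi_k\colon\{0,\dots,n-1\}\hookrightarrow\{0,\dots,n\}$ is the order-preserving map omitting $k$. Expanding $\partial\Delta_i(\iota_n)$, $\Delta_i(\partial\iota_n)$ and $(1+T)\Delta_{i-1}(\iota_n)$ this way, each becomes a sum of tensors $d_A\ot d_B$ with $|A|+|B|=n-i+1$. The organizing observation is that these expansions are segregated by the overlap $A\cap B$: the terms of $\Delta_i\circ\partial$ all have $|A\cap B|=1$, those of $(1+T)\Delta_{i-1}$ all have $A\cap B=\emptyset$, while $\partial\circ\Delta_i$ contributes to both overlap types. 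Thus the identity splits into two independent matching problems, one per value of $|A\cap B|$.

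For the overlap-$1$ terms I claim everything cancels. Fix a target with $A\cap B=\{c\}$. It receives at most two contributions from $\partial\Delta_i$—adjoining $c$ to the left factor or to the right factor—of which at most one is nonzero, since they prescribe opposite sides for $c$; together with at most one contribution from $\Delta_i\partial$, coming from the face $d_c$. The crux is a parity lemma: the residue $s_j-j\bmod 2$ governing the split $U=U^0\sqcup U^1$ is unchanged when a vertex is deleted and the rest relabelled by $\psi_c^{-1}$, because the unit drop in the value of every surviving vertex above $c$ is compensated by the unit drop in its rank. This lemma identifies the (at most one) firing $\partial\Delta_i$-condition with the $\Delta_i\partial$-condition, so for every such target the number of contributions is $0$ or $2$, and they cancel mod $2$.

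The remaining, and main, work is the overlap-$0$ matching, where I must show the disjoint-term coefficient of $\partial\Delta_i$ equals that of $(1+T)\Delta_{i-1}$. Writing $U=A\sqcup B$ and $\chi$ for the target labelling ($0$ on $A$, $1$ on $B$), the $\partial\Delta_i$ coefficient unwinds, via the deletion rule (removing $p\in U$ flips the split labels of all vertices above $p$), into the number of \emph{threshold} positions $p\in U$ for which the deviation sequence $\delta=\sigma_U\oplus\chi$ reads $0$ strictly below $p$ and $1$ strictly above $p$. A short case analysis of $\delta$ shows this count is $2$ when $\delta$ is a non-constant monotone step, $0$ when $\delta$ has a descent, and $1$ when $\delta$ is constant; so modulo $2$ it equals the indicator of $\delta\equiv 0$ or $\delta\equiv 1$, which is exactly the coefficient of $d_A\ot d_B$ in $\Delta_{i-1}+T\Delta_{i-1}$. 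I expect the delicate point throughout to be precisely this bookkeeping of how the parity-determined split $U^0/U^1$ reshuffles under insertion and deletion of a face index—the global flip of all labels past the modified slot; once the two parity observations above are in hand, both the cancellation and the threshold count are routine. Finally, I would note that tensors in which a factor has collapsed below dimension $0$ (that is, $|A|=n+1$ or $|B|=n+1$) vanish on both sides and may be disregarded.
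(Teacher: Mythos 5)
Your proposal is correct. Note first that you silently repaired the statement: as printed it reads $\Delta_{i+1}\circ\partial$, but degree counting (each $\Delta_j$ raises degree by $j$) together with \eqref{e:boundary of cup-i} forces $\Delta_{i}\circ\partial$, which is the identity you prove and also the identity the paper actually proves.

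Your argument shares the paper's skeleton but executes the decisive step differently. The paper likewise expands $\partial\circ\Delta_i$ by the compression formula (\cref{l:partial dU = dxU}) and splits the inserted index according to whether it lies in $U^{1+\varepsilon}$ or in $\overline{U}$ (\cref{l:boundary of Delta}) --- exactly your segregation by $|A\cap B|$. Your overlap-one cancellation is \cref{l:pigeon hole} in disguise: the parity lemma you isolate (deleting $c$ and relabelling by $\psi_c^{-1}$ preserves the residues $u_j+j$ mod $2$, since value and rank drop together) is precisely the fact that makes the paper's bijection $S_1\to S_2$ match terms, so there the two proofs agree in substance. The genuine divergence is the overlap-zero step, the paper's \cref{l:big lemma}: the paper observes that deleting the maximum of $U$ preserves all indices while deleting the minimum flips them, uses this to identify the terms of $(1+T)\Delta_{i-1}$ with the four families of extremal insertions, and cancels all non-extremal insertions in pairs via the replacement maps $V_U^\bu$, $W_U^\bu$ and the four set identities \eqref{e:big lemma four identities}. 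You instead fix a target $d_A\ot d_B$ and count its contributions directly, getting $2$, $0$ or $1$ according to whether the deviation sequence $\delta$ is a non-constant monotone step, has a descent, or is constant. The two mechanisms are equivalent --- your two thresholds for a monotone step are exactly a pair matched by the paper's replacement maps, and your constant-$\delta$ targets are its extremal insertions --- but your count is more compact and makes the mod-$2$ bookkeeping transparent, while the paper's pairing is more explicit about which terms cancel against which. Two smaller points in your favor: the reduction to the fundamental simplex via \cref{l:naturality} cleanly justifies the coefficient-comparison format that the paper leaves implicit (it manipulates face operators applied to an arbitrary simplex, which amounts to the same thing), and your closing caveat about collapsed tensors ($|A|=n+1$ or $|B|=n+1$) is exactly what is needed at $i=0$, where the constant-$\delta$ targets are precisely the collapsed ones; the paper instead disposes of $i=0$ separately by citing that $\Delta_0$ is the Alexander--Whitney chain map.
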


We devote \cref{s:proof} to the proof of this lemma.
We now turn to the development of a fast method for the computation of Steenrod squares on the cohomology of finite simplicial complexes leveraging formula \eqref{e:new formulas}.

\section{New algorithm for Steenrod squares} \label{s:algorithm}

\begin{figure}[b]
	\input{aux/stsq}
	\caption{Let $X$ be a simplicial complex $X$.
		Passing the support $A \subseteq X_n$ of a cocycle $\alpha$ and an integer $k \in \{1, \dots, n\}$, the algorithm returns the support $B \subseteq X_{n+k}$ of a cocycle representing $Sq^k \big( [\alpha] \big)$.
		We use the notation $S \xor S^\prime = S \cup S^\prime \setminus (S \cap S^\prime)$ and $\ind(S) = \{\ind(v) \mid v \in S\}$.}
	\label{f:algorithm}
\end{figure}

For a finite simplicial complex $X$, integer $k$ and cocycle $\alpha$ of degree $-n$, the cocycle $\beta = (\alpha \ot \alpha)\Delta_{n-k}(-)$ is by
\cref{d:steenrod squares} and \cref{t:main} a representative of $Sq^k \big( [\alpha] \big)$.
In this section we will present and discuss an algorithmic description of $\supp \beta$, the support of $\beta$.

Let $A = \{a_1, \dots, a_m\} \subseteq X_n$ be the support of $\alpha$, which is defined by
\[
\alpha(x) = \begin{cases}
1 & x \in A, \\ 0 & x \not\in A,
\end{cases}
\]
for any $x \in X$.

If $k < 0$ or $k > n$, we have $\beta = 0$ by definition,
so $\supp \beta = \emptyset$.
If $k = 0$, \cref{ex:Sq0 is the identity} shows that $\beta = \alpha$, so $\supp \beta = A$.
For the remaining cases we have the following characterization whose proof occupies \cref{s:correctness}.

\begin{theorem} \label{t:algorithm}
	Let $B$ be the output of \cref{a:algorithm} when the input is $A$ and $k$, then $\supp \beta = B$.
\end{theorem}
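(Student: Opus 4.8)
\section*{Proof proposal}

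The plan is to compute $\beta(y)$ explicitly on a simplex $y$ and then reorganise the resulting mod-$2$ count so that it matches the toggling performed by \cref{a:algorithm}. First I would pin down the degrees. Since $\alpha$ is supported on $X_n$, the pairing $(\alpha \ot \alpha)$ annihilates a summand $d_{U^0}(y) \ot d_{U^1}(y)$ of $\Delta_{n-k}(y)$ unless both faces are $n$-simplices, i.e. unless $\bars{U^0} = \bars{U^1} = \bars{y} - n$. Combined with $\bars{U} = \bars{y} - (n-k)$ this forces $\bars{y} = n+k$ and $\bars{U} = 2k$, so $\beta$ is supported on $X_{n+k}$ and, for $y \in X_{n+k}$,
\begin{equation*}
\beta(y) \equiv \#\bigl\{\, U \subseteq \{0, \dots, n+k\} : \bars{U} = 2k,\ d_{U^0}(y) \in A,\ d_{U^1}(y) \in A \,\bigr\} \pmod 2,
\end{equation*}
where membership of both faces in $A \subseteq X_n$ already forces $\bars{U^0} = \bars{U^1} = k$.

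Next I would set up a correspondence between the subsets $U$ counted above and the pairs examined by the algorithm. Given such a $U$, the faces $F_0 = d_{U^0}(y)$ and $F_1 = d_{U^1}(y)$ are distinct $n$-simplices (distinct because $U^0, U^1$ are disjoint and nonempty for $k \geq 1$) with $F_0 \cup F_1 = y$ and with symmetric difference sitting exactly at the positions $U$; conversely an unordered pair $\{a_i, a_j\} \subseteq A$ with $a_i \cup a_j = y$ recovers $U = \pos_y(a_i \xor a_j)$, of size $2k$ by inclusion--exclusion. The one subtlety is that many pairs share the same $U$: fixing $U$, each way of distributing its $2k$ vertices between the two faces gives a pair with symmetric difference $U$, but only the distribution dictated by the parity partition $(U^0, U^1)$ of \eqref{e:partition subsets} produces the pair $\{d_{U^0}(y), d_{U^1}(y)\}$. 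I would call such a pair \emph{aligned} and observe that $U$ is counted by $\beta(y)$ if and only if its aligned pair lies in $A$. Hence $\beta(y)$ equals, mod $2$, the number of aligned pairs drawn from $A$ with union $y$.

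Then I would translate alignment into the algorithm's test. For a pair with $a_{ij} = a_i \cup a_j = y \in X_{n+k}$, the index $\ind(v) = p + \overline{p} \bmod 2$ assigned to a vertex $v \in \overline{a}_{ij} = a_i \xor a_j$ compares its position $p$ in $y$ with its rank $\overline{p}$ among the symmetric-difference vertices, and therefore records precisely whether the position of $v$ lands in $U^0$ or in $U^1$ under the parity rule (up to a harmless global swap of the two labels coming from the $0/1$-indexing convention). The pair is aligned exactly when $\pos_y(\overline{a}_i)$ and $\pos_y(\overline{a}_j)$ are $U^0$ and $U^1$ in some order, i.e. when $\ind$ sends one of $\overline{a}_i, \overline{a}_j$ entirely to one label and the other entirely to the opposite label; a short case check on the subsets of $\{0,1\}$ shows this is equivalent to $\ind(\overline{a}_i) \xor \ind(\overline{a}_j) = \{0,1\}$, which is exactly the condition guarding the toggle $B = B \xor \{a_{ij}\}$.

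Finally I would assemble these observations. The algorithm visits each unordered pair $\{a_i, a_j\} \subseteq A$ once and toggles $a_{ij} = y$ into $B$ precisely when $a_{ij}$ is a simplex of dimension $n+k$ and the pair is aligned; thus a given $y \in X_{n+k}$ lands in $B$ if and only if an odd number of aligned pairs from $A$ have union $y$, which is exactly $\beta(y) = 1$. Therefore $B = \supp \beta$. The main obstacle, and the step deserving the most care, is the bookkeeping of the third paragraph: matching the indexing of $\ind$ to the $1$-indexed rank appearing in \eqref{e:partition subsets}, and verifying that among all distributions of the $2k$ symmetric-difference vertices exactly one is aligned, so that the mod-$2$ count is neither doubled nor halved.
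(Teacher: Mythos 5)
Your proposal is correct and takes essentially the same route as the paper's proof: both reduce $\beta(y)$ to a mod-$2$ count of unordered pairs from $A$ with union $y$ satisfying the alignment condition $\{\pos_{a_{ij}}(\overline{a}_i), \pos_{a_{ij}}(\overline{a}_j)\} = \{U^0, U^1\}$, and both convert that condition into the algorithm's test via the commutativity of $\ind = \ind_U \circ \pos_{a_{ij}}$ together with the characterization $U^\varepsilon = \ind_U^{-1}(\varepsilon)$. The only difference is packaging: the paper isolates the counting step in \cref{l:freeness} (vanishing of diagonal terms, non-cancellation of the two orderings, and the support condition $x = a_i \cup a_j$), whereas you absorb exactly the same facts into the injectivity of the correspondence $U \mapsto \{d_{U^0}(y), d_{U^1}(y)\}$ between counted subsets and aligned pairs.
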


We now give an intuitive comparison between our proposed method and a more direct approach using a generic presentation of a cup-$i$ construction
\[
\triangle_i(x) =
\sum_{\Gamma_i} x^{(1)} \ot x^{(2)}.
\]
An algorithm for the computation of the support of $(\alpha \ot \alpha) \triangle_{n-k}(-)$ can be defined by looping over $X_{n+k}$ times $\Gamma_{n-k}$ while evaluating $(\alpha \ot \alpha)$ on the associated tensor pair.
\cref{a:algorithm} improves on this scheme by using the specific form of \eqref{e:new formulas} to filter summands using the support of $\alpha$.
So, even if $X_{n+k}$ and $\Gamma_{n-k}$ are very large, \cref{a:algorithm} loops over
\[
\frac{m(m-1)}{2}
\]
unordered pairs of distinct simplices, where $m$ is the cardinality of $\supp \alpha$.
Many of these pairs are discarded quickly, after checking that the union of its simplices does not have exactly $n+k$ vertices.
One could wonder if the next step in \cref{a:algorithm} -- determining if a resulting set of $n+k$ vertices is a simplex of $X$ -- could slow down the routine significantly.
As illustrated in \cref{s:comparison} through an example, even for a sub-optimal implementation of our algorithm this is not the case.
For high-performance tasks this look-up time could be further reduced by using data structures specialized on the representation of simplicial complexes, but we do not discuss these optimizations here.

\section{Correctness of \texorpdfstring{\cref{a:algorithm}}{Algorithm 1}} \label{s:correctness}

Let us consider the same setup as above. Explicitly, a simplicial complex $X$, a cocycle $\alpha$ whose support is $A = \{a_1, \dots, a_m\} \subseteq X_n$ and an integer $k \in \{1, \dots, n\}$.
Denote by $\alpha_i$ the cochain dual of $a_i$, and consider
$\Delta_{n-k}$ as in \cref{d:cup-i coproducts}.

Before proving \cref{t:algorithm}, the correctness of \cref{a:algorithm}, let us record a few properties satisfied by our cup-$i$ construction.

\begin{lemma} \label{l:freeness}
	For $i \neq j$ and $x \in X_{n+k}$:
	\begin{enumerate}
		\item $(\alpha_i \ot \alpha_i)\Delta_{n-k}(x) = 0$.
		\item If $(\alpha_i \ot \alpha_j)\Delta_{n-k}(x) \neq 0$ then $(\alpha_j \ot \alpha_i)\Delta_{n-k}(x) = 0$.
		\item If $(1+T)(\alpha_i \ot \alpha_j)\Delta_{n-k}(x) \neq 0$ then $x = a_i \cup a_j$.
	\end{enumerate}
\end{lemma}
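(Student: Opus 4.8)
The plan is to reinterpret each scalar $(\alpha_i \ot \alpha_j)\Delta_{n-k}(x)$ as a mod $2$ count of subsets and then to exploit a strong rigidity: both the subset and its parity partition are forced by the data. Write $x = [v_0, \dots, v_{n+k}]$. Since $\Delta_{n-k}(x) = \sum_U d_{U^0}(x) \ot d_{U^1}(x)$ runs over the subsets $U \subseteq \{0, \dots, n+k\}$, evaluating $\alpha_i \ot \alpha_j$ picks out, mod $2$, exactly those $U$ for which $d_{U^0}(x) = a_i$ and $d_{U^1}(x) = a_j$. First I would record the cardinality constraint: because $a_i, a_j$ are $n$-dimensional while $x$ is $(n+k)$-dimensional, a surviving summand must have $|U^0| = |U^1| = k$, so $|U| = 2k$.

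The key step is a uniqueness observation. If $a_i$ is a face of $x$, there is exactly one $k$-subset $S_i \subseteq \{0,\dots,n+k\}$ with $d_{S_i}(x) = a_i$, namely the set of positions of the vertices of $x$ not lying in $a_i$; if $a_i$ is not a face of $x$, there is none. Consequently a subset $U$ can contribute to $(\alpha_i \ot \alpha_j)\Delta_{n-k}(x)$ only if $a_i, a_j \subseteq x$ and $U^0 = S_i$, $U^1 = S_j$. This forces $U = S_i \cup S_j$ with $S_i \cap S_j = \emptyset$, and moreover the parity rule \eqref{e:partition subsets} applied to this single $U$ must reproduce precisely the split $(S_i, S_j)$. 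Hence at most one summand ever survives, and each evaluation lies in $\{0,1\}$.

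With this in hand the three claims reduce to elementary bookkeeping. For (1), taking $j = i$ would require $U^0 = U^1 = S_i$, which is impossible since $U^0$ and $U^1$ are disjoint and $|S_i| = k \geq 1$; the relevant sum is therefore empty. For (2), nonvanishing of $(\alpha_i \ot \alpha_j)\Delta_{n-k}(x)$ singles out the subset $U = S_i \cup S_j$ whose unique parity partition is $(S_i, S_j)$, whereas nonvanishing of $(\alpha_j \ot \alpha_i)\Delta_{n-k}(x)$ would single out the same subset with partition $(S_j, S_i)$; since a given $U$ has only one parity partition, this forces $S_i = S_j$, contradicting $S_i \cap S_j = \emptyset$ with $S_i \neq \emptyset$. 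For (3), I would observe that $S_i \cap S_j = \emptyset$ says precisely that every vertex of $x$ lies in $a_i \cup a_j$, which together with $a_i, a_j \subseteq x$ yields $x = a_i \cup a_j$; since $1+T$ acts by symmetrizing, nonvanishing of $(1+T)(\alpha_i \ot \alpha_j)\Delta_{n-k}(x)$ means one of the two orderings is nonzero, and either one gives the same conclusion.

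The main obstacle is conceptual rather than computational: one must recognize that the ordered pair $(a_i, a_j)$ rigidly determines both the candidate subset $U$ and the only split its parity partition could possibly realize, so that the apparent sum over $2k$-subsets collapses to a single candidate term. Once this rigidity is isolated, parts (1)--(3) follow immediately from the disjointness $S_i \cap S_j = \emptyset$, the inequality $S_i \neq S_j$, and the uniqueness of the parity partition; the only care required is the cardinality accounting ($|S_i| = |S_j| = k$ inside an ambient index set of size $n+k+1$) used to pass from $S_i \cap S_j = \emptyset$ to $x = a_i \cup a_j$.
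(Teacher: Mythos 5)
Your proof is correct and takes essentially the same approach as the paper's: both reinterpret each evaluation as a sum over the subsets $U$ of cardinality $2k$ and derive (1)--(3) from the disjointness $U^0 \cap U^1 = \emptyset$ together with the fact that a face of $x$ determines its set of removed positions uniquely. The only difference is one of presentation: you state this uniqueness (your $S_i$) explicitly as a preliminary rigidity observation, whereas the paper invokes it implicitly when asserting, e.g., that nonvanishing forces $U^0 = U^1$ in (1) or $V^0 = W^1$, $W^0 = V^1$ in (2).
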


\begin{proof}
	Recall that
	\begin{equation*}
	\Delta_{n-k}(x) \ = \! \sum_{\substack{U \subseteq \{0, \dots, n+k\} \\ \vert U \vert = 2k}}
	d_{U^0}(x) \ot d_{U^1}(x).
	\end{equation*}

	\begin{enumerate}
		\item If $(\alpha_i \ot \alpha_i)\Delta_{n-k}(x) \neq 0$, then there exists a non-empty $U$ in the sum with $U^0 = U^1$, which is impossible since $U^0 \cap U^1 = \emptyset$.

		\item If $(\alpha_i \ot \alpha_j)\Delta_{n-k}(x) \neq 0$ and $(\alpha_j \ot \alpha_i)\Delta_{n-k}(x) \neq 0$, then there are distinct subsets $V$ and $W$ in the sum such that $V^0 = W^1$ and $W^0 = V^1$.
		But then $V = V^0 \cup V^1 = W^1 \cup W^0 = W$, which is a contradiction.

		\item If $(1+T)(\alpha_i \ot \alpha_j)\Delta_{n-k}(x) \neq 0$, then there exists $U \subseteq \{0, \dots, n+k\}$ of cardinality $2k$ such that $\{a_i, a_j\} = \{d_{U^0}(x), d_{U^1}(x)\}$ and, since $U^0 \cap U^1 = \emptyset$, we have $x = d_{U^0}(x) \cup d_{U^1}(x)$.
		The claim follows.
	\end{enumerate}
\end{proof}

We will need the following functions.

\begin{definition} \label{d:position function}
	Given a finite totally ordered set $S$, the \textit{position function} $\pos_S \colon S \to \N$ sends an element $s \in S$ to the cardinality of $\{s^\prime \in S \mid s^\prime \leq s\}$.
\end{definition}

\begin{definition} \label{d:index function}
	For $U = \{u_1 < \cdots < u_m\} \subseteq \N$ the \textit{index function} is defined by
	\[
	\begin{split}
	\ind_U \colon U & \to \Ftwo \\
	u_j & \mapsto (u_j + j) \modulo 2.
	\end{split}
	\]
\end{definition}

We can use the index function to give the following characterization of \eqref{e:partition subsets} in the definition of our cup-$i$ construction.

\begin{lemma} \label{l:partition via index function}
	For any finite set $U \subset \N$
	\[
	U^0 = \ind_U^{-1}(0), \qquad
	U^1 = \ind_U^{-1}(1).
	\]
\end{lemma}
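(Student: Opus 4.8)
The plan is to prove the two set equalities by a direct unwinding of the relevant definitions, the only nontrivial ingredient being an elementary congruence modulo $2$. First I would fix the notation $U = \{u_1 < \cdots < u_m\}$ so that both the partition \eqref{e:partition subsets} and the index function of \cref{d:index function} refer to the same ordered enumeration; this matching of the subscript $j$ across the two definitions is what makes the statement hold on the nose rather than merely up to reindexing, and it is worth flagging at the outset.

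Next I would observe that, by \cref{d:index function}, an element $u_j$ lies in $\ind_U^{-1}(0)$ exactly when $(u_j + j) \equiv 0 \pmod 2$, and in $\ind_U^{-1}(1)$ exactly when $(u_j + j) \equiv 1 \pmod 2$. The core of the argument is then the elementary equivalence
\[
u_j + j \equiv 0 \pmod 2 \iff u_j \equiv j \pmod 2,
\]
which holds because $-j \equiv j \pmod 2$, so that adding $j$ and subtracting $j$ agree modulo $2$; dually, $u_j + j \equiv 1 \pmod 2$ if and only if $u_j \not\equiv j \pmod 2$.

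Comparing these conditions with the definition \eqref{e:partition subsets} of $U^0$ and $U^1$ then gives the two claimed equalities immediately: the defining condition $u_j \equiv j \pmod 2$ for membership in $U^0$ is literally the condition $\ind_U(u_j) = 0$, and likewise $u_j \not\equiv j \pmod 2$ characterizes both $U^1$ and $\ind_U^{-1}(1)$. There is essentially no genuine obstacle here; the statement is a repackaging designed so that later arguments (for instance the position-function computation feeding into \cref{a:algorithm}) can refer to $U^0$ and $U^1$ through the single function $\ind_U$ rather than through the two separate parity conditions. The one point I would state explicitly is the congruence $-j \equiv j \pmod 2$, since it is precisely what converts the additive form $u_j + j$ appearing in $\ind_U$ into the comparison form $u_j \equiv j$ appearing in \eqref{e:partition subsets}.
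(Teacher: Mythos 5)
Your proof is correct and matches the paper's treatment: the paper states this lemma without proof, regarding it as an immediate unwinding of \cref{d:index function} and \eqref{e:partition subsets}, which is exactly the verification you spell out via the congruence $u_j + j \equiv u_j - j \pmod 2$.
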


\begin{notation}
	We will use the following notational conventions:
	\begin{enumerate}
		\item For any function $f$ and $S \subseteq \domain(f)$
		\[
		f(S) = \{ f(s) \mid s \in S\}.
		\]
		\item For any two sets $S$ and $S^\prime$
		\[
		S \xor S^\prime = S \cup S^\prime \setminus (S \cap S^\prime).
		\]
	\end{enumerate}
\end{notation}

\begin{proof}[Proof of \cref{t:algorithm}]
	We have to show that $\supp \beta = B$, where $\beta = (\alpha \ot \alpha) \Delta_{n-k}$ and $B$ is the output of \cref{a:algorithm} when the input is $A$ and $k$.

	Using (1) in \cref{l:freeness}, for any $x \in X_{n+k}$ we have that
	\begin{equation} \label{e:proof correctness}
	\begin{split}
	\beta(x) & =
	(\alpha \ot \alpha) \Delta_{n-k}(x) \\ & =
	(\alpha_1 + \cdots + \alpha_m)^{\ot 2} \Delta_{n-k}(x) \\ & =
	\Big(\sum_{i \neq j} \alpha_i \ot \alpha_j + \sum_{i} \alpha_i \ot \alpha_i \Big)
	\Delta_{n-k}(x) \\ & =
	\Big(\sum_{i \neq j} \alpha_i \ot \alpha_j \Big)
	\Delta_{n-k}(x) \\ & =
	\sum_{i < j} (1+T) (\alpha_i \ot \alpha_j)
	\Delta_{n-k}(x)
	\end{split}
	\end{equation}
	where $\alpha_i$ is the cochain dual to $a_i$.
	By (2) and (3) in \cref{l:freeness}, for any pair $\{\alpha_i, \alpha_j\}$ the evaluation of $\alpha_i \ot \alpha_j$ or $\alpha_j \ot \alpha_i$ on $\Delta_{n-k}(x)$ is non-zero if and only if
	\[
	(1+T)(\alpha_i \ot \alpha_j) \Delta_{n-k}(x) \neq 0
	\]
	and $x$ is equal to $a_{ij} = a_i \cup a_j$.
	We say that the pair $\{\alpha_i, \alpha_j\}$ is  \textit{non-zero} in this case.
	Using these observations and \eqref{e:proof correctness},
	the support of $\beta$ can be constructed iterating over pairs $i < j$ as follows: Consider a set $B^\prime$ initialized as the empty set and update it to $B^\prime \triangle \, \{a_{ij}\} = B^\prime \cup \{a_{ij}\} \setminus (B^\prime \cap \{a_{ij}\})$ when $\{\alpha_i, \alpha_j\}$ is non-zero.
	Here we are taking advantage of the fact that cardinality mod 2 can be kept track of using the symmetric difference.
	At the end of the iteration we have $\supp \beta = B^\prime$.

	The construction of $B^\prime$ is structurally the same as that of $B$ with the exception that the condition on a pair $\{\alpha_i, \alpha_j\}$ to be non-zero is replaced by an \textbf{if} condition in terms of the pair $\{a_i, a_j\}$ only.
	The theorem will follow after showing that these two conditions are equivalent.

	A pair $\{\alpha_i, \alpha_j\}$ is non-zero if and only if there exists $U \subseteq \{0, \dots, n+k\}$ of cardinality $2k$ such that
	\[
	\{a_i, a_j\} =\{d_{U^0}(a_{ij}), d_{U^1}(a_{ij})\}.
	\]
	If such $U$ exists it is unique, and it is the image under the position function $\pos_{a_{ij}} \colon {a}_{ij} \to \N$ of the subset $\overline{a}_{ij}$ defined by
	\begin{equation*}
	\overline{a}_{i} = a_i \setminus a_j, \qquad
	\overline{a}_{j} = a_j \setminus a_i, \qquad
	\overline{a}_{ij} = \overline{a}_i \cup \overline{a}_j.
	\end{equation*}
	Therefore, a pair $\{\alpha_i, \alpha_j\}$ is non-zero if an only if for $U = \pos_{a_{ij}}(\overline{a}_{ij})$ one has
	\begin{equation} \label{e:pos's equal U's}
	\big\{\pos_{a_{ij}}(\overline{a}_i),\, \pos_{a_{ij}}(\overline{a}_j)\big\} = \{U^0, U^1\}.
	\end{equation}
	We now give an equivalent condition for this.
	Consider the function $\ind \colon \overline{a}_{ij} \to \Ftwo$ defined by
	\[
	\ind(v) =
	\pos_{a_{ij}}(v) + \pos_{\overline{a}_{ij}}(v) \text{ mod }2
	\]
	and notice that the following diagram
	\[
	\begin{tikzcd}[column sep=small, row sep=small]
	\overline{a}_{ij} \arrow[rr, "\pos_{a_{ij}}", "\cong"'] \arrow[dr, "\ind"', bend right] & &
	U \arrow[dl, "\ind_U", bend left] \\
	& \Ftwo &
	\end{tikzcd}
	\]
	commutes.
	Therefore, by \cref{l:partition via index function} the identity \eqref{e:pos's equal U's} holds if an only if the function $\ind \colon \overline{a}_{ij} \to \Ftwo$ is constant on both $\overline{a}_i$ and $\overline{a}_j$ with different values.
	This is equivalent to the identity
	\begin{equation*}
	\ind(\overline{a}_i) \xor \ind(\overline{a}_j) = \{0,1\},
	\end{equation*}
	as in the second \textbf{if} condition of \cref{a:algorithm}.
\end{proof}

\section{Proof-of-concept comparison} \label{s:comparison}

In this section we present a proof-of-concept comparison between the existing method for the computation of Steenrod squares on simplicial complexes, based on Gonz\'alez-D\'iaz--Real's approach \cite[Corollary 3.2]{gonzalez-diaz1999steenrod}, and the one introduced here.
We used a \verb|Python| implementation of our algorithm and the open source computer algebra system \verb|SAGE v9.3.rc3| \cite{sagemath} which includes an implementation of the existing method written by John Palmieri.
A complexity theoretic analysis goes beyond the scope of this paper.

\subsection{Suspensions of the real projective plane}

Given a topological space $X$ the \textit{suspension} of $X$ is the topological space $\suspension X$ obtained from $X \times [0,1]$ by collapsing $X \times \{0\}$ and $X \times \{1\}$ to points.
Suspension is a natural construction and, for each integer $i \neq 0$, there is an isomorphism $H^i(X) \cong H^{i+1}(\suspension X)$, which can be extended to $i = 0$ by considering reduced cohomology.
A crucial fact about Steenrod squares is that for reduced cohomology with mod 2 coefficients, all operations commuting with the suspension isomorphism are generated by the Steenrod squares.

The real projective plane $\RP^2$, obtained by identifying antipodal points in a sphere, is the simplest space with a non-trivial Steenrod square.
Its reduced mod 2 cohomology has a single basis element $x_j \in \widetilde{H}^j(\RP^2; \Ftwo)$ for $j \in \{1, 2\}$ and satisfies $Sq^1(x_1) = x_2$.
Therefore, its $i^\th$ suspension $\suspension^i \RP^2$ has a non-trivial operation given by $Sq^1(\suspension^i x_1) = \suspension^i x_2$.

\subsection{Pipeline}

We now describe the pipeline we followed for the comparison.
In \verb|SAGE| we produced a simplicial complex model of $\suspension^i \RP^2$ for each $i \in \{0, \dots, 10\}$ using the methods \verb|RealProjectiveSpace(2)| and \verb|suspension(i)|.
We used the method \verb|cohomology_ring(GF(2))| on this model and on its output the method \verb|basis()| to obtain a model for the element $\suspension^i x_1$.
Finally, we applied the method \verb|Sq(k)| to it with $k=1$ and record the execution time of this last step.
We implemented in \verb|Python| an alternative for the method \verb|Sq(k)| based on \cref{a:algorithm} and modified the above pipeline accordingly.
We recorded the average execution time of these implementations for each $\suspension^i \RP^2$ over $\lfloor 10000/2^i \rfloor$ runs for each $i \in \{0, \dots, 10\}$.
The results of this pipeline are presented in \cref{f:comparison}.

\begin{figure}
	\includegraphics[width=0.9\textwidth]{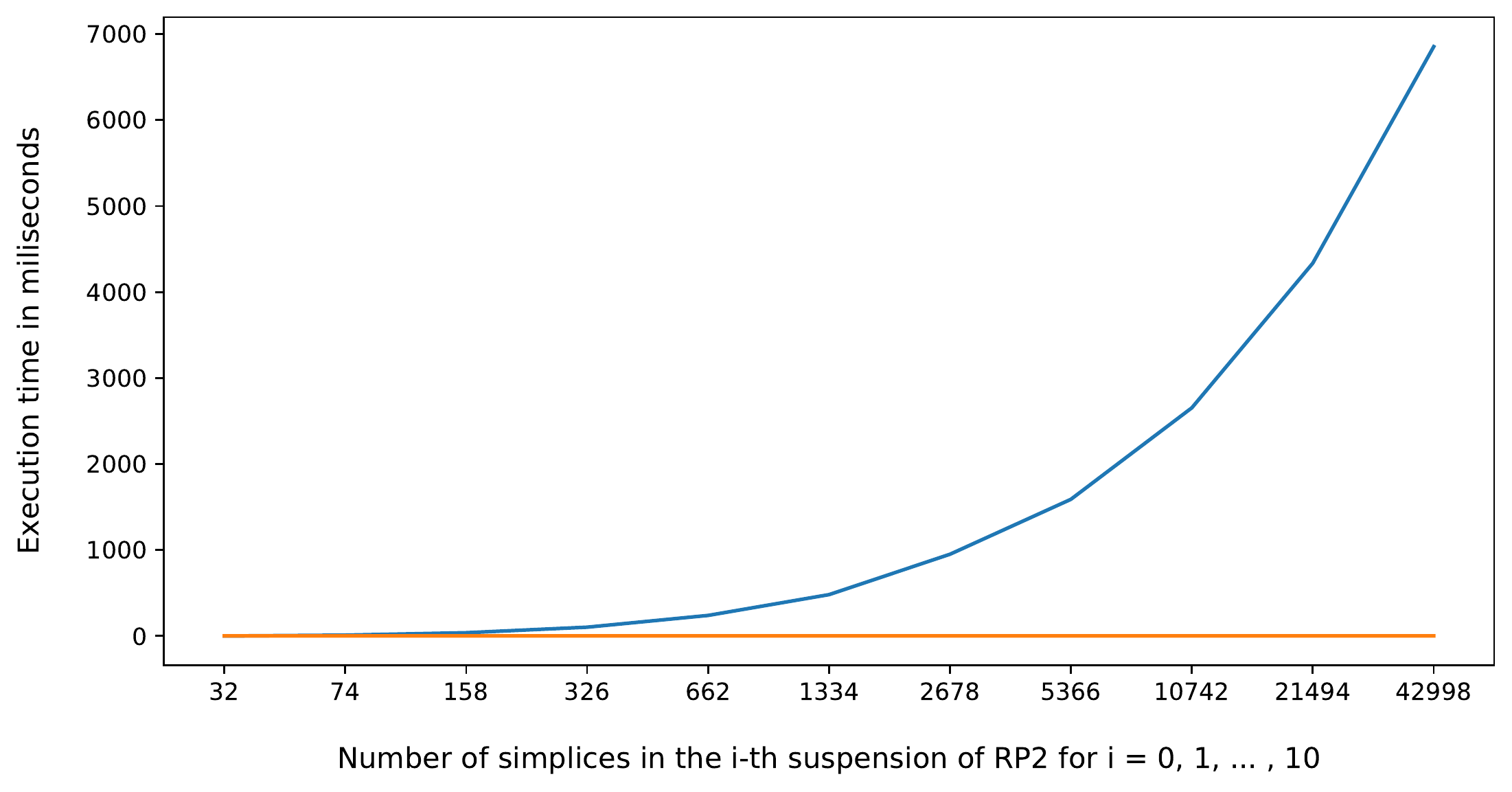}
	\caption{Average execution time in \texttt{SAGE} of two methods computing Steenrod squares. In orange the one proposed in this article and in blue the one included in \texttt{SAGE v9.3.rc3}. More specifically, for each $i \in \{0, \dots, 10\}$ we timed the computation of the non-trivial Steenrod square in the cohomology of the $i^\th$ suspension of the real projective plane, averaged over a number of runs equal to the integral part of $\frac{10000}{2^i}$.}
	\label{f:comparison}
\end{figure}

\section{Proof of \texorpdfstring{\cref{l:main}}{main lemma}} \label{s:proof}

Throughout this section $X$ denotes a simplicial complex and $\Delta_i$ the $i^\th$ map introduced in \cref{d:cup-i coproducts}.

To aid readability of the relatively long proof of \cref{l:main} we split it into four lemmas.
We start by introducing some notation.

\begin{definition}
	For $n \geq0$ and $q \in \{0, \dots, n\}$, let $\P_q(n)$ be the set of all sets $U = \{u_1 < \cdots < u_q\}$ with each $u_j \in \{0, \dots, n\}$.
	For any $U \in \P_q(n)$, let $\overline{U} \in \P_{n+1-q}(n)$ contain the elements of $\{0, \dots, n\}$ not in $U$. For $\bu \in \overline{U}$, define $\bu.U \in \P_{q+1}(n)$ to contain $\bu$ and the elements in $U$.
	For $q > 0$ and $u \in U$, define $U \sm u \in \P_{q-1}(n)$ to contain the elements of $U$ not equal to $u$.
\end{definition}

Recall that for any $U = \{u_1 < \cdots < u_q\} \in \P_q(n)$ we write $d_U$ for $d_{u_1} \cdots \; d_{u_q}$ with $d_{\emptyset} = \id$, that
the \textit{index function} of $U$ is given by
\begin{equation*}
\begin{tikzcd}[row sep=-3pt, column sep=small,
/tikz/column 1/.append style={anchor=base east},
/tikz/column 2/.append style={anchor=base west}]
\ind_U \colon U \arrow[r] & \Ftwo \\
u_i \arrow[r, mapsto] & u_i + i,
\end{tikzcd}
\end{equation*}
and that we denote the preimage of $\varepsilon \in \Ftwo \cong \{0, 1\}$ by $U^\varepsilon \subseteq U$.

With this notation, for any simplex $x \in X_n$ and $i \in \{0, \dots, n\}$ we have
\begin{equation*}
\Delta_{i}(x)\ =\! \sum_{U \in \P_{n-i}(n)} d_{U^0}(x) \ot d_{U^1}(x).
\end{equation*}

\begin{lemma} \label{l:partial dU = dxU}
	For any $x \in X_n$ and $U \in \P_{q}(n)$
	\begin{equation} \label{lemma1: existence:eq1}
	\partial_{n-q} \circ d_U(x) = \sum_{\bar{u} \in \overline{U}} d_{\bar{u}.U}(x).
	\end{equation}
\end{lemma}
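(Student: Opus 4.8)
The plan is to reduce this identity to the standard combinatorial description of iterated face maps, combined with the fact that over $\Ftwo$ the signs in the boundary formula disappear.

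First I would record the effect of the iterated face $d_U$ on a simplex $x = [v_0, \dots, v_n]$. Writing $U = \{u_1 < \cdots < u_q\}$, the composite $d_U(x) = d_{u_1} \cdots d_{u_q}(x)$ deletes exactly the vertices occupying positions $u_1, \dots, u_q$, so that $d_U(x)$ is the $(n-q)$-simplex whose vertices are the $v_j$ with $j \in \overline{U}$, listed in increasing order of $j$. The simplicial relation \eqref{e:simplicial relation} guarantees that this deletion is independent of the order in which the individual face maps are applied, which is exactly what makes $d_U$ well defined as a set of indices rather than a sequence.

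Next I would expand the boundary and identify its terms. Since $d_U(x)$ has degree $n-q$,
\[
\partial_{n-q}\, d_U(x) = \sum_{i=0}^{n-q} (-1)^i\, d_i\big(d_U(x)\big),
\]
and, because we work over $\Ftwo$ (as in \cref{d:cup-i coproducts}), every sign equals $1$. Enumerating $\overline{U} = \{\bu_0 < \cdots < \bu_{n-q}\}$, the $i^\th$ vertex of $d_U(x)$ is $v_{\bu_i}$, so $d_i\big(d_U(x)\big)$ deletes $v_{\bu_i}$ in addition to the vertices already removed by $d_U$; that is, $d_i\big(d_U(x)\big) = d_{\bu_i . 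U}(x)$. As $i$ ranges over $\{0, \dots, n-q\}$ the element $\bu_i$ ranges over all of $\overline{U}$, so the sum becomes $\sum_{\bu \in \overline{U}} d_{\bu . U}(x)$, which is the claimed identity. Note the degrees match: each $\bu . U$ has $q+1$ elements, so $d_{\bu . U}(x)$ sits in degree $n-q-1$.

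I expect the only delicate point to be the bookkeeping in the translation $i \mapsto \bu_i$; the vanishing of signs and the vertex-deletion description of $d_U$ are routine. I would therefore either read this translation directly off the vertex description above, or, if a fully algebraic derivation is preferred, establish the identity $d_i\, d_U = d_{\bu_i . U}$ by induction on $q$, commuting $d_i$ through the sorted composition $d_{u_1} \cdots d_{u_q}$ via \eqref{e:simplicial relation} and tracking the resulting index shift.
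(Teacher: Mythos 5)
Your proof is correct and follows essentially the same route as the paper's: expand $\partial_{n-q} \circ d_U(x)$ as a sum of faces (with signs vanishing over $\Ftwo$) and reindex that sum via the bijection sending $i \in \{0, \dots, n-q\}$ to the $i^\th$ element of $\overline{U}$, so that $d_i\, d_U(x) = d_{\bu_i.U}(x)$. The paper phrases this reindexing algebraically by commuting $d_i$ through $d_{u_1} \cdots d_{u_q}$ with the simplicial relation \eqref{e:simplicial relation}, while you read it off the vertex-deletion description of iterated faces; these are the same argument in different notation, and your fallback suggestion (induction on $q$ using the simplicial relation) is exactly the paper's computation.
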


\begin{proof}
	Let $U = \{u_1 < \cdots < u_q\}$. Using the simplicial relation \eqref{e:simplicial relation} we have
	\begin{equation*}
	\partial_{n-q} \circ d_U(x) =
	\sum_{i=0}^{n-q} d_i\, d_{u_1} \cdots\, d_{u_q}(x) =
	\sum_{\bar{u} \in \overline{U}} d_{u_1} \cdots\, d_{\bar{u}} \cdots\, d_{u_q}(x) =
	\sum_{\bar{u} \in \overline{U}} d_{\bar{u}.U}(x)
	\end{equation*}
	as claimed.
\end{proof}

\begin{lemma} \label{l:pigeon hole}
	For any $x \in X_n$ and $q \in \{1, \dots, n\}$
	\begin{equation} \label{e:pigeon hole 1}
	\Delta_{n-q} \circ \partial_n (x)\ =\!
	\sum_{U \in \P_q(n)} \left( \,
	\sum_{u \in U^1} d_{u.U^0} \ot d_{U^1} +
	\sum_{u \in U^0} d_{U^0} \ot d_{u.U^1} \right)(x \ot x).
	\end{equation}
\end{lemma}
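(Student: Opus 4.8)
The plan is to expand both composites completely and then match index sets through a single reindexing bijection. Working over $\Ftwo$ the signs in the boundary vanish, so $\partial_n(x) = \sum_{w=0}^n d_w(x)$. Applying the definition of $\Delta_{n-q}$ to each face $d_w(x) \in X_{n-1}$, and noting that $(n-1)-(n-q) = q-1$, I would first obtain
\[
\Delta_{n-q} \circ \partial_n(x) = \sum_{w=0}^{n}\ \sum_{V \in \P_{q-1}(n-1)} d_{V^0}\, d_w(x) \ot d_{V^1}\, d_w(x).
\]

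The organizing idea is that iterated face maps simply delete vertices. Let $s_w \colon \{0,\dots,n-1\} \to \{0,\dots,n\}\sm w$ be the order-preserving bijection onto the complement of $w$. The simplicial relation \eqref{e:simplicial relation} gives $d_W\, d_w = d_{\{w\}\cup s_w(W)}$ for every $W \subseteq \{0,\dots,n-1\}$, since both sides delete exactly the vertices at positions $\{w\}\cup s_w(W)$. Hence each pair $(w,V)$ determines $U \defeq \{w\}\cup s_w(V) \in \P_q(n)$ with $w \in U$, and $(w,V)\mapsto (U,w)$ is a bijection onto $\{(U,w) : U \in \P_q(n),\ w\in U\}$, with inverse recording the positions $V = s_w^{-1}(U\sm w)$. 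Reindexing the double sum along this bijection turns it into $\sum_{U}\sum_{w\in U} d_{V^0} d_w(x)\ot d_{V^1} d_w(x)$.

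The heart of the argument is a parity bookkeeping showing that $s_w$ preserves the index function, i.e. $s_w(V^0)=U^0\sm w$ and $s_w(V^1)=U^1\sm w$. Writing $U=\{u_1<\cdots<u_q\}$ with $w=u_k$: an element $u_j$ with $j<k$ keeps position $u_j$ and rank $j$ in $V$, so its index in $V$ is $u_j+j=\ind_U(u_j)$; an element $u_j$ with $j>k$ is shifted to position $u_j-1$ and rank $j-1$ in $V$, so its index in $V$ is $(u_j-1)+(j-1)\equiv u_j+j=\ind_U(u_j) \pmod 2$. The two compensating shifts by $-1$ are exactly what makes the indices agree, and I expect this cancellation to be the main obstacle to state cleanly. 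Granting it, the vertices deleted by $d_{V^0}d_w$ are $\{w\}\cup(U^0\sm w)$ and those deleted by $d_{V^1}d_w$ are $\{w\}\cup(U^1\sm w)$. A case split finishes the proof: when $w\in U^0$ the summand is $d_{U^0}(x)\ot d_{w.U^1}(x)$, and when $w\in U^1$ it is $d_{w.U^0}(x)\ot d_{U^1}(x)$. Collecting the $w\in U^1$ terms into the first inner sum and the $w\in U^0$ terms into the second yields exactly the claimed right-hand side.
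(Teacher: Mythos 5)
Your proof is correct and takes essentially the same route as the paper's: both expand $\Delta_{n-q}\circ\partial_n(x)$ as a double sum over $(w,V)$ with $V\in\P_{q-1}(n-1)$ and reindex it via the insertion bijection $(w,V)\mapsto\big(\{w\}\cup s_w(V),\,w\big)$ onto pairs $(U,w)$ with $U\in\P_q(n)$, $w\in U$, then match summands through the simplicial identity. If anything, your explicit parity bookkeeping (showing $s_w$ preserves the index function, hence $s_w(V^\varepsilon)=U^\varepsilon\sm w$) spells out the step the paper compresses into ``the simplicial identity implies,'' and your two-sided inverse is cleaner than the paper's injectivity-plus-cardinality argument.
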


\begin{proof}
	Let
	\begin{align*}
	& S_1 = \big\{ (u, V)\mid V \in \P_{q-1}(n-1) \text{ and } u \in \{0,\dots,n\} \big\}, \\
	& S_2 = \big\{ (w, W)\mid W \in \P_{q}(n) \text{ and } w \in W \big\}.
	\end{align*}
	Identity \eqref{e:pigeon hole 1} is equivalent to
	\begin{equation} \label{e:pigeon hole 2}
	\sum_{(u, V) \in S_1} d_{V^0}d_u \ot d_{V^1}d_u \ \, = \!
	\sum_{(w, W) \in S_2}
	\begin{cases}
	d_{w.W^0} \ot d_{W^1} & \text{ if } w \in W^1, \\
	d_{W^0} \ot d_{w.W^1} & \text{ if } w \in W^0.
	\end{cases}
	\end{equation}
	Define $S_1 \to S_2$ by sending $\big(u,\, \{v_1 < \cdots < v_{q-1}\} \big)$ to $\big(u,\, \{w_1 < \cdots < w_{q}\} \big)$ with
	\begin{equation*}
	w_i =
	\begin{cases}
	v_i & \text{ if } v_i < u, \\
	u & \text{ if } v_i < u \leq v_{i+1}, \\
	v_{i-1}+1 & \text{ if } v_i < u.
	\end{cases}
	\end{equation*}
	This function is a bijection since it is injective and both sets have cardinality
	\begin{equation*}
	\frac{(n+1)!}{(n+1-q)!(q-1)!}.
	\end{equation*}
	The simplicial identity implies that if $(u, V) \mapsto (u, W)$ then
	\begin{equation*}
	d_{V^0}d_u \ot d_{V^1}d_u =
	\begin{cases}
	d_{u.W^0} \ot d_{W^1} & \text{ if } u \in W^1, \\
	d_{W^0} \ot d_{u.W^1} & \text{ if } u \in W^0,
	\end{cases}
	\end{equation*}
	which concludes the proof.
\end{proof}

\begin{lemma} \label{l:boundary of Delta}
	For any $x \in X_n$ and $i \in \{1, \dots, n\}$
	\begin{equation} \label{e:boundary of Delta}
	(\partial \circ \Delta_i + \Delta_i \circ \partial)(x) \ = \!
	\sum_{\substack{U \in \P_{n-i}(n) \\ \bu \in \overline{U}}} \Big( d_{\bu.U^0} \ot d_{U^1} \, + \, d_{U^0} \ot d_{\bu.U^1} \Big) (x \ot x).
	\end{equation}
\end{lemma}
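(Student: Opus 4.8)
The plan is to compute $\partial \circ \Delta_i(x)$ and $\Delta_i \circ \partial(x)$ separately, add them over $\Ftwo$, and watch a large family of terms cancel in pairs, leaving exactly the claimed sum over $\overline{U}$.

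First I would expand $\partial \circ \Delta_i(x)$. Writing $\Delta_i(x) = \sum_{U \in \P_{n-i}(n)} d_{U^0}(x) \ot d_{U^1}(x)$ and applying the Leibniz rule for the tensor differential, which over $\Ftwo$ reads $\partial(a \ot b) = \partial a \ot b + a \ot \partial b$, together with \cref{l:partial dU = dxU}, each summand contributes $\partial d_{U^0}(x) = \sum_{\bu \in \overline{U^0}} d_{\bu . U^0}(x)$ on the left factor and symmetrically on the right. The key combinatorial observation is that the complement $\overline{U^0}$ of $U^0$ inside $\{0, \dots, n\}$ decomposes as the disjoint union $U^1 \sqcup \overline{U}$, and likewise $\overline{U^1} = U^0 \sqcup \overline{U}$. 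This splits $\partial \circ \Delta_i(x)$ into four groups: two \emph{internal} groups indexed by $\bu \in U^1$ (respectively $\bu \in U^0$), in which the extra face is applied with an index drawn from the opposite block of $U$, and two \emph{external} groups indexed by $\bu \in \overline{U}$.

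Next I would invoke \cref{l:pigeon hole} with $q = n-i$, which yields
\[
\Delta_i \circ \partial(x) = \sum_{U \in \P_{n-i}(n)} \Big( \sum_{u \in U^1} d_{u.U^0} \ot d_{U^1} + \sum_{u \in U^0} d_{U^0} \ot d_{u.U^1} \Big)(x \ot x).
\]
These are precisely the two internal groups produced above. Adding the two expressions over $\Ftwo$, each internal term appears twice and hence vanishes, so that $(\partial \circ \Delta_i + \Delta_i \circ \partial)(x)$ collapses to the two external groups
\[
\sum_{\substack{U \in \P_{n-i}(n) \\ \bu \in \overline{U}}} \Big( d_{\bu.U^0} \ot d_{U^1} + d_{U^0} \ot d_{\bu.U^1} \Big)(x \ot x),
\]
which is the claimed identity. (For the boundary case $i = n$, i.e. $q = 0$, both internal sums are empty and the identity reduces directly to the Leibniz rule applied to $\Delta_n(x) = x \ot x$.)

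The main obstacle I anticipate is bookkeeping rather than conceptual: one must verify that the partition $U = U^0 \sqcup U^1$ cut out by the index function is literally the same partition appearing in \cref{l:pigeon hole}, and that the internal terms of $\partial \circ \Delta_i$ match the pigeonhole output summand-by-summand, with the same index $u$ in the same block, so that the mod-$2$ cancellation is exact. Confirming the disjoint decomposition $\overline{U^0} = U^1 \sqcup \overline{U}$ and checking that no external term is inadvertently double-counted then completes the argument.
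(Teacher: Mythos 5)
Your proposal is correct and follows essentially the same route as the paper's proof: expand $\partial \circ \Delta_i$ via the Leibniz rule and \cref{l:partial dU = dxU}, split each complement $\overline{U^\varepsilon}$ as the disjoint union of $U^{1+\varepsilon}$ and $\overline{U}$, and identify the resulting internal terms with $\Delta_i \circ \partial(x)$ via \cref{l:pigeon hole}, so that only the external terms indexed by $\bu \in \overline{U}$ survive mod $2$. Your explicit handling of the edge case $i = n$ (where $q = 0$ falls outside the stated range of \cref{l:pigeon hole} but both the internal sums and $\Delta_n \circ \partial(x)$ vanish) is a detail the paper leaves implicit.
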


\begin{proof}
	Let $q = n-i$, we want to prove that
	\begin{equation*}
	\Big( \partial_{2n-q} \circ \Delta_{n - q} \, +\, \Delta_{n-q} \circ \partial_n \Big) (x) \ = \!
	\sum_{\substack{U \in \P_{q}(n) \\ \bu \in \overline{U}}} \Big( d_{\bu.U^0} \ot d_{U^1} \, + \, d_{U^0} \ot d_{\bu.U^1} \Big) (x \ot x).
	\end{equation*}
	Using \cref{l:partial dU = dxU} we have
	\begin{equation*}
	\begin{split}
	\partial_{2n-q} \circ \Delta_{n - q} (x) \ = & \
	\sum_{U \in \P_q(n)} \Big( \partial \circ d_{U^0} \ot d_{U^1}\ +\
	d_{U^0} \ot \partial \circ d_{U^1} \Big) (x \ot x) \\ = & \!\!
	\sum_{\substack{U \in \P_q(n) \\ \bar v \in \overline{U^0},\ \bar w \in \overline{U^1} } }\!\! \Big( d_{\bar v.U^0} \ot d_{U^1}\ +\ d_{U^0} \ot d_{\bar w.U^1}\Big) (x \ot x).
	\end{split}
	\end{equation*}
	Since for $\varepsilon \in \Ftwo$ we have a partition of $\overline{U^\varepsilon}$ into $U^{1+\varepsilon}$ and $\overline{U}$ the above can be written as
	\begin{align*}
	\partial_{2n-q} \circ \Delta_{n - q} (x) \ = &
	\sum_{U \in \P_q(n)} \left( \,
	\sum_{u \in U^1} d_{u.U^0} \ot d_{U^1} +
	\sum_{u \in U^0} d_{U^0} \ot d_{u.U^1} \right)(x \ot x) \\ + &
	\sum_{\substack{U\in\P_{q}(n) \\ \bu \in \overline{U}}} \Big( d_{\bu.U^0} \ot d_{U^1}\ +\ d_{U^0} \ot d_{\bu.U^1}\Big) (x \ot x)
	\end{align*}
	and \cref{l:pigeon hole} implies
	\begin{align*}
	\partial_{2n-q} \circ \Delta_{n - q} (x) \ =& \ \
	\Delta_{n-q} \circ \partial_n (x) \ \  \\ +&
	\sum_{\substack{U\in\P_{q}(n) \\ \bu \in \overline{U}}} \Big( d_{\bu.U^0} \ot d_{U^1}\ +\ d_{U^0} \ot d_{\bu.U^1}\Big) (x \ot x)
	\end{align*}
	which proves the claim.
\end{proof}

\begin{lemma} \label{l:big lemma}
	For any $x \in X_n$ and $i \in \{1, \dots, n\}$
	\begin{equation}
	\sum_{\substack{U \in \P_{n-i}(n) \\ \bu \in \overline{U}}} d_{\bu.U^0} \ot d_{U^1}\, +\ d_{U^0} \ot d_{\bu.U^1}\, = \
	(1+T) \Delta_{i-1}(x).
	\end{equation}
\end{lemma}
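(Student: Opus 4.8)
The plan is to fix $q = n-i$, unpack both sides, and compare the coefficient of each elementary tensor $d_L(x) \ot d_R(x)$ after reindexing the left-hand side. Since $x$ is a single simplex, two faces $d_A(x)$ and $d_B(x)$ coincide iff $A = B$ as subsets of $\{0, \dots, n\}$; hence every summand on either side is an elementary tensor $d_L(x) \ot d_R(x)$ indexed by an \emph{ordered bipartition} $(L,R)$ of some $V \in \P_{q+1}(n)$, meaning $L \sqcup R = V$. Expanding the right-hand side gives
\[
(1+T)\Delta_{i-1}(x) = \sum_{V \in \P_{q+1}(n)} \big( d_{V^0}(x) \ot d_{V^1}(x) + d_{V^1}(x) \ot d_{V^0}(x) \big),
\]
so the bipartitions occurring there are exactly $(V^0, V^1)$ and $(V^1, V^0)$, each with coefficient $1$. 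It therefore suffices to show that, on the left, the mod-$2$ coefficient of $d_L(x) \ot d_R(x)$ is $1$ precisely for these two bipartitions of each $V$ and $0$ otherwise.

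First I would reindex the outer sum. The assignment $(U, \bu) \mapsto (V, \bu)$ with $V = \bu.U$ is a bijection between $\{(U,\bu) : U \in \P_q(n),\ \bu \in \overline U\}$ and $\{(V, \bu) : V \in \P_{q+1}(n),\ \bu \in V\}$, with inverse $U = V \sm \bu$. Writing $\bu = v_p$ for the $p^\th$ smallest element of $V$, the technical heart is to compare $\ind_U$ with $\ind_V$: deleting $v_p$ leaves the index of every $v_j$ with $j < p$ unchanged and flips the index of every $v_j$ with $j > p$. Consequently the first summand $d_{\bu.U^0}(x) \ot d_{U^1}(x)$ realizes the ordered bipartition in which $v_j$ lands in $L$ exactly when $j < p$ and $v_j \in V^0$, or $j > p$ and $v_j \in V^1$, or $j = p$; the second summand $d_{U^0}(x) \ot d_{\bu.U^1}(x)$ is identical except that $v_p$ is placed in $R$. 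In particular, strictly below $p$ this bipartition agrees with the \emph{natural} one $(V^0, V^1)$ and strictly above $p$ with its transpose $(V^1, V^0)$; note that these two distinguished bipartitions disagree at every position of $V$.

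Next I would count multiplicities. Fixing an ordered bipartition $(L,R)$ of $V$, I record for each position $j$ whether the placement of $v_j$ agrees with the transpose bipartition $(V^1, V^0)$, obtaining a word $\chi \in \{0,1\}^{q+1}$. By the description above, a choice $\bu = v_p$ contributes $(L,R)$ for exactly one of the two summands iff $\chi$ is $0$ on every position below $p$ and $1$ on every position above $p$; I call such $p$ a \emph{valid breakpoint}. Since $(V^0,V^1)$ and $(V^1,V^0)$ are complementary, any word admitting a valid breakpoint is forced to be sorted, of the form $0^a 1^b$. A short case check then gives the count modulo $2$: a sorted word with both letters present has exactly two breakpoints (hence contributes $0$), while the all-$0$ word, which corresponds to $(L,R)=(V^0,V^1)$, and the all-$1$ word, which corresponds to $(V^1,V^0)$, each have exactly one. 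This shows the left-hand coefficient equals $1$ exactly for $(V^0,V^1)$ and $(V^1,V^0)$ and $0$ otherwise, completing the argument.

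The main obstacle will be precisely this mod-$2$ multiplicity bookkeeping: one must see that summing over all ways of adjoining the deleted index $\bu$ makes every ordered bipartition except the two honest ones occur an even number of times. The index-flip computation is what makes the breakpoint structure visible and reduces the whole count to the elementary observation that a binary word admits a valid breakpoint only if it is sorted, with the two constant words being the only ones having an odd number of breakpoints.
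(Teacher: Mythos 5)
Your proof is correct, and it takes a genuinely different route from the paper's, although both rest on the same elementary fact about the index function. You prove and use the general deletion rule: removing the $p^{\th}$ smallest element of $V$ leaves $\ind$ unchanged at positions below $p$ and flips it at positions above $p$. The paper only uses the two extreme instances of this rule (deleting the maximum preserves all indices, deleting the minimum flips them all): with these it rewrites $(1+T)\Delta_{i-1}(x)$ as the sum of exactly those left-hand terms in which $\bu$ is extremal in $\bu.U$, and then cancels the remaining, non-extremal terms in pairs, which requires an eight-fold taxonomy of summands (the sets $L^{e}_{max}$, $\overline{L}^{e,e}_{max}$, etc.) and four set identities proven by the substitution that swaps $\bu$ with its left or right neighbor in $U$. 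Your argument replaces that bookkeeping with a coefficient count over a common index set: after the reindexing $(U,\bu)\leftrightarrow(V,v_p)$, every summand on either side is an elementary tensor labelled by an ordered bipartition of some $V\in\P_{q+1}(n)$ (legitimate because distinct index sets give distinct faces of a fixed simplex $x$), and your breakpoint lemma for binary words shows that every bipartition other than $(V^0,V^1)$ and $(V^1,V^0)$ is hit an even number of times. The two cancellations are in fact the same phenomenon: the two breakpoints $p=a$ and $p=a+1$ of a non-constant sorted word $0^a1^b$ correspond exactly to the paper's neighbor-swap pairing, and the unique breakpoints of the two constant words correspond to the paper's extremal terms, which are the ones matching the right-hand side. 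What your organization buys is uniformity --- one counting lemma in place of a case taxonomy and four bijections; what it costs is only the mildly stronger deletion rule and the (easy but necessary) observation that comparing coefficients of elementary tensors is valid.
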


\begin{proof}
	Let $q = n-i \in \{0, \dots, n-1\}$.
	We need to show that
	\begin{equation} \label{e:big lemma}
	\sum_{\substack{U \in \P_{q}(n) \\ \bu \in \overline{U}}} d_{\bu.U^0} \ot d_{U^1}\, +\ d_{U^0} \ot d_{\bu.U^1}\, = \
	(1+T) \sum_{U \in \P_{q+1}(n)} d_{U^0} \ot d_{U^1.}
	\end{equation}
	Notice that for any $U = \{u_1 < \cdots < u_{q+1}\} \in \P_{q+1}(n)$ we have
	\begin{align*}
	& \forall u \in (U \sm u_1), \qquad \ind_{U}(u) \neq \ind_{U \sm u_1}(u), \\
	& \forall u \in (U \sm u_{q+1}), \quad \ind_{U}(u) = \ind_{U \sm u_{q+1}}(u).
	\end{align*}
	Therefore, the right hand side of \eqref{e:big lemma}
	\begin{equation} \label{e:big lemma rhs}
	\sum_{U \in \P_{q+1}(n)} d_{U^0} \ot d_{U^1}\, +\ d_{U^1} \ot d_{U^0}
	\end{equation}
	is equal to
	\begin{equation} \label{e:big lemma rhs w/o endpoints}
	\begin{split}
	&\, \sum_{\substack{U \in \P_{q+1}(n) \\ \ind_U(u_{q+1}) = 0}}
	d_{u_{q+1}.(U \sm u_{q+1})^0} \ot d_{(U \sm u_{q+1})^1}\ \ \\ +
	&\, \sum_{\substack{U \in \P_{q+1}(n) \\ \ind_U(u_{q+1}) =1}}
	d_{(U \sm u_{q+1})^0} \ot d_{u_{q+1}.(U \sm u_{q+1})^1} \\ +
	&\ \,\sum_{\substack{U \in \P_{q+1}(n) \\ \ind_U(u_1) = 1}} d_{u_1.(U \sm u_1)^0} \ot d_{(U\sm u_1)^1} \quad \\ +
	&\, \sum_{\substack{U \in \P_{q+1}(n) \\ \ind_U(u_1) = 0}} d_{(U \sm u_1)^0} \ot d_{u_1.(U\sm u_1)^1.}
	\end{split}
	\end{equation}

	With notation that will be introduced next, \eqref{e:big lemma rhs w/o endpoints} will be seen to be equal to
	\begin{equation*}
	\sum_{{L}_{max}^{e}} d_{\bu.U^0} \ot d_{U^1}\ \ +\ \
	\sum_{{R}_{max}^{o}} d_{U^0} \ot d_{\bu.U^1}\ \ +\ \
	\sum_{{L}_{min}^{o}} d_{\bu.U^0} \ot d_{U^1}\ \ +\ \
	\sum_{{R}_{min}^{e}} d_{U^0} \ot d_{\bu.U^1,}
	\end{equation*}
	and the left hand side of \eqref{e:big lemma} to
	\begin{equation*}
	\sum_{L} d_{\bu.U^0} \ot d_{U^1}\ +\ \
	\sum_{R} d_{U^0} \ot d_{\bu.U^1.}
	\end{equation*}

	For any $U = \{u_1 < \cdots < u_q\} \in \P_{q}(n)$ and $\bu \in \overline{U}$ define when possible
	\begin{equation*}
	l_U^\bu = \max \{u \in U \mid u < \bu\}, \qquad
	r_U^\bu = \min\{u \in U \mid \bu < u\},
	\end{equation*}
	and the following sets, where we use tabbing to represent inclusion and a schematic to aid readability:\\

\newcommand{\vsk}{\vspace*{3pt}}

\begin{minipage}{.6\textwidth}
	$L = \big\{ \bu.U^0 \ot U^1\ | \ U = \{u_1 < \cdots < u_{q} \} \in \P_q(n),\ \bu \in \overline{U} \big\}$ \vsk
	\begin{tab}
		$L^{e} = \{\ind_{\bu.U}(\bu) = 0\}$ \par \vsk
		\begin{tab}
			$L_{max}^{e} = \{u_q < \bu\}$ \par \vsk
			$\overline{L}_{max}^{e} = L^{e} \setminus L_{max}^{e}$ \vsk
			\begin{tab}
				$\overline{L}_{max}^{e,e} = \{\ind_{\bu.U}(r_{U}^\bu) = 0 \}$ \par \vsk
				$\overline{L}_{max}^{e,o} = \{\ind_{\bu.U}(r_{U}^\bu) = 1 \}$
			\end{tab}
		\end{tab} \vsk \vsk
		$L^{o} = \{\ind_{\bu.U}(\bu) = 1\}$ \vsk
		\begin{tab}
			$L_{min}^{o} = \{\bu < u_1 \}$ \par \vsk
			$\overline{L}_{min}^{o} = L^{o} \setminus L_{min}^{o}$ \vsk
			\begin{tab}
				$\overline{L}_{min}^{o,e} =
				\{\ind_{\bu.U}(l_{U}^\bu) = 0 \}$ \par \vsk
				$\overline{L}_{min}^{o,o} =
				\{\ind_{\bu.U}(l_{U}^\bu) = 1 \}$
			\end{tab}
		\end{tab}
	\end{tab}
\end{minipage}
\begin{minipage}{.4\textwidth}
	\begin{center}
		\begin{tikzpicture}[scale = .26]
		\node (L) at (0,0) [scale = 1.5]{$L$};
		\draw [semithick] (-6,0) -- (L) -- (6,0);
		\draw [semithick] (-6,-12) -- (6,-12) -- (6,12) -- (-6,12) -- (-6,-12);
		Statement
		\node (Le) at (0,6) [scale = 1.3] {$L^e$};
		\node (Lemax) at (-3,6) {$ {L}^e_{max}$};
		\node (-Lemax) at (3,6) {$\; \overline{L}^e_{max}$\!\!};
		\node at (3,3) [scale = .7] {$\overline{L}^{e,o}_{max}$};
		\node at (3,9) [scale = .7] {$\overline{L}^{e,e}_{max}$};

		\draw [semithick] (L) -- (Le) -- (0,12);
		\draw [semithick] (6,6) -- (-Lemax) -- (Le);

		\node (Lo) at (0,-6) [scale = 1.3] {$L^o$};
		\node (Lomin) at (-3,-6) {$ {L}^o_{min}$};
		\node (-Lomin) at (3,-6) {$\, \overline{L}^o_{min}$\!\!};
		\node at (3,-3) [scale = .7] {$\overline{L}^{o,e}_{min}$};
		\node at (3,-9) [scale = .7] {$\overline{L}^{o,o}_{min}$};

		\draw [semithick] (L) -- (Lo) -- (0,-12);
		\draw [semithick] (6,-6) -- (-Lomin) -- (Lo);
		\end{tikzpicture}
	\end{center}
\end{minipage}
\ \\

\begin{minipage}{.6\textwidth}
	$R = \big\{ U^0 \ot \bu.U^1 \mid U = \{u_1 < \cdots < u_{q}\} \in \P_q(n), \ \bu \in \overline{U} \big\}$ \vsk
	\begin{tab}
		$R^{e} = \{\ind_{\bu.U}(\bu) = 0\}$ \par \vsk
		\begin{tab}
			$R_{min}^{e} = \{u_q < \bu\}$ \par \vsk
			$\overline{R}_{min}^{e} = R^{e} \setminus R_{min}^{e}$ \vsk
			\begin{tab}
				$\overline{R}_{min}^{e,e} = \{\ind_{\bu.U}(r_{U}^\bu) = 0\}$ \par \vsk
				$\overline{R}_{min}^{e,o} = \{\ind_{\bu.U}(r_{U}^\bu) = 1\}$
			\end{tab}
		\end{tab} \vsk \vsk
		$R^{o} = \{\ind_{\bu.U}(\bu) = 1\}$ \par \vsk
		\begin{tab}
			$R_{max}^{o} = \{\bu < u_1\}$ \par \vsk
			$\overline{R}_{max}^{o} = R^{o} \setminus R_{max}^{o}$ \vsk
			\begin{tab}
				$\overline{R}_{max}^{o,e} = \{\ind_{\bu.U}(l_{U}^\bu) = 0\}$ \par \vsk
				$\overline{R}_{max}^{o,o} = \{\ind_{\bu.U}(l_{U}^\bu) = 1\}$
			\end{tab}
		\end{tab}
	\end{tab}
\end{minipage}
\begin{minipage}{.4\textwidth}
	\begin{center}
		\begin{tikzpicture}[scale = .26]
		\node (L) at (0,0) [scale = 1.5]{$R$};
		\draw [semithick] (-6,0) -- (L) -- (6,0);
		\draw [semithick] (-6,-12) -- (6,-12) -- (6,12) -- (-6,12) -- (-6,-12);

		\node (Le) at (0,6) [scale = 1.3] {$R^e$};
		\node (Lemax) at (-3,6) {$ {R}^e_{min}$};
		\node (-Lemax) at (3,6) {$\overline{R}^e_{min}$\!\!};
		\node at (3,3) [scale = .7] {$\overline{R}^{e,o}_{min}$};
		\node at (3,9) [scale = .7] {$\overline{R}^{e,e}_{min}$};

		\draw [semithick] (L) -- (Le) -- (0,12);
		\draw [semithick] (6,6) -- (-Lemax) -- (Le);

		\node (Lo) at (0,-6) [scale = 1.3] {$R^o$};
		\node (Lomin) at (-3,-6) {$ {R}^o_{max}$};
		\node (-Lomin) at (3,-6) {\,$\overline{R}^o_{max}$\!\!};
		\node at (3,-3) [scale = .7] {$\overline{R}^{o,e}_{max}$};
		\node at (3,-9) [scale = .7] {$\overline{R}^{o,o}_{max}$};

		\draw [semithick] (L) -- (Lo) -- (0,-12);
		\draw [semithick] (6,-6) -- (-Lomin) -- (Lo);
		\end{tikzpicture}
	\end{center} \vsk
\end{minipage}

	With this notation, \eqref{e:big lemma} is equivalent to
	\begin{align*}
	\sum_{L} d_{\bu.U^0} \ot d_{U^1}\ +\ \
	\sum_{R} d_{U^0} \ot d_{\bu.U^1} \ & = \
	\sum_{{L}_{max}^{e}} d_{\bu.U^0} \ot d_{U^1}\ \ +\ \
	\sum_{{R}_{max}^{o}} d_{U^0} \ot d_{\bu.U^1}\ \\ \ & +\ \
	\sum_{{L}_{min}^{o}} d_{\bu.U^0} \ot d_{U^1}\ \ +\ \
	\sum_{{R}_{min}^{e}} d_{U^0} \ot d_{\bu.U^1,}
	\end{align*}
	or, equivalently, to their difference being $0$.
	Explicitly,
	\begin{equation*}
	\sum_{\overline{L}_{max}^{e}} d_{\bu.U^0} \ot d_{U^1}\ \ +\ \
	\sum_{\overline{R}_{max}^{o}} d_{U^0} \ot d_{\bu.U^1}\ \ +\ \
	\sum_{\overline{L}_{min}^{o}} d_{\bu.U^0} \ot d_{U^1}\ \ +\ \
	\sum_{\overline{R}_{min}^{e}} d_{U^0} \ot d_{\bu.U^1} \ = \ 0,
	\end{equation*}
	which is a direct consequence of the following identities we now prove:
	\begin{equation} \label{e:big lemma four identities}
	\overline{R}_{min}^{e,e} = \overline{L}_{max,}^{e,e} \qquad
	\overline{R}_{min}^{e,o} = \overline{R}_{max,}^{o,e} \qquad
	\overline{L}_{min}^{o,e} = \overline{L}_{max,}^{e,o} \qquad
	\overline{L}_{min}^{o,o} = \overline{R}_{max.}^{o,o}
	\end{equation}

	For a pair $U \in \P_q(n)$ and $\bu \in \overline{U}$ define when possible the sets
	\begin{equation*}
	V^\bu_U = \{v_1 < \cdots < v_q\}, \qquad W_U^\bu = \{w_1 < \cdots < w_q\},
	\end{equation*}
	by
	\begin{align*}
	v_i =
	\begin{cases}
	u_i & \text{ if } u_i \neq l_U^\bu, \\
	\bu	& \text{ if } u_i = l_U^\bu,
	\end{cases}
	\qquad \quad
	w_i =
	\begin{cases}
	u_i & \text{ if } u_i \neq r_U^\bu, \\
	\bu	& \text{ if } u_i = r_U^\bu.
	\end{cases}
	\end{align*}
	Intuitively, $V_U^\bu$ is obtained from $U$ by replacing with $\bu$ the largest element in $U$ that is less than $\bu$.
	A similar description applies to $W_U^\bu$.
	When $U$ and $\bu$ are clear from the context we simplify notation writing $V$ and $W$ instead of $V^\bu_U$ and $W^\bu_U$, and $l$ and $r$ instead of $l_U^\bu$ and $r_U^\bu$.
	Notice that
	\begin{equation*}
	l.V = \bu.U = r.W
	\end{equation*}
	and that for any $u \in \bu.U$ with $u \not \in \{l,\, \bu,\, r\}$ we have
	\begin{equation*}
	\ind_{V}(u) = \ind_{U}(u) = \ind_{W}(u).
	\end{equation*}

	Let us now show that $\overline{R}_{min}^{e,e} = \overline{L}_{max}^{e,e}$.
	Consider $U^0 \ot \bu.U^1 \in \overline{R}_{min}^{e, e}$ which by definition satisfies $\ind_{\bu.U}(\bu) = \ind_{\bu.U}(l_{U}^\bu) = 0$.
	This is equivalent to $\bu \in V^1$ and $l \in U^0$.
	Therefore,
	\begin{equation*}
	U^0 \ot \bu.U^1 = \, l.V^0 \ot V^1
	\end{equation*}
	and, since $l.V^0 \ot V^1$ is an element in $\overline{L}_{max}^{e,e}$, we have $\overline{R}_{max}^{e,e} \subseteq \overline{L}_{max}^{e,e}$\,.
	Similarly, an element $\bu.U^0 \ot U^1 \in \overline{L}_{max}^{e,e}$ is equal to $W^0 \ot r.W^1 \in \overline{R}_{min}^{e,e}$ which gives the other inclusion and proves the first identity in \eqref{e:big lemma four identities}.
	The others are proven analogously, and the lemma follows.
\end{proof}

We can now provide the proof of \cref{l:main} and of our main theorem.

\begin{proof}[Proof of \cref{l:main}]
	For any integer $i$ and $x \in X_n$ we need to prove that
	\begin{equation} \label{e:main thm proof}
	(\partial \circ \Delta_{i} + \Delta_{i} \circ \partial)(x) = (1 + T) \Delta_{i-1}(x).
	\end{equation}
	If $i < 0$ or $i > n+1$ then both sides are equal to $0$ by definition.
	If $i = 0$, the right hand side of \eqref{e:main thm proof} is $0$ by definition and the left hand side is $0$ since the Alexander--Whitney diagonal is a chain map, please consult \cref{ex:alexander-whitney diagonal} for the relationship between $\Delta_0$ and this well known map.
	If $i = n+1$, then the left hand side of \eqref{e:main thm proof} is equal to $0$ by definition and the right hand side is equal to $(1+T) (x \ot x) = 0$.
	If $i \in \{0, \dots, n-1\}$, \cref{l:boundary of Delta} expresses the left hand side of \eqref{e:main thm proof} as
	\begin{equation*}
	\sum_{\substack{U \in \P_{q}(n) \\ \bu \in \overline{U}}} \Big( d_{\bu.U^0} \ot d_{U^1} \, + \, d_{U^0} \ot d_{\bu.U^1} \Big) (x \ot x),
	\end{equation*}
	whose right hand side is, thanks to \cref{l:big lemma}, equal to $(1+T)\Delta_{i-1}(x)$.
\end{proof}

\section{Secondary operations} \label{s:outlook}

Lifting relations from the (co)homology level to the (co)chain level is often a source of further (co)homological structure.
For example, cup-$i$ products provide an effective construction of coboundaries coherently enforcing the commutativity relation of the cup product in cohomology and lead to Steenrod squares.
It is natural then to wonder about what relations are satisfied by Steenrod squares themselves.
There are two notable relations to consider.
The first one, known as the \textit{Cartan relation}, expresses the interaction between these operations and the cup product:
\begin{equation*}
Sq^k \big( [\alpha] [\beta] \big) = \sum_{i+j=k} Sq^i \big([\alpha]\big)\, Sq^j \big([\beta]\big),
\end{equation*}
whereas the second, the \textit{Adem relation} \cite{adem1952iteration}, expresses dependencies appearing through iteration:
\begin{equation*}
Sq^i Sq^j = \sum_{k=0}^{\lfloor i/2 \rfloor} \binom{j-k-1}{i-2k} Sq^{i+j-k} Sq^k,
\end{equation*}
where $\lfloor- \rfloor$ denotes the integer part function and the binomial coefficient is reduced mod $2$.
To tap into the secondary structure associated with these relations, one needs to provide effective cochain level proofs for them, that is to say, construct explicit cochains enforcing them when passing to cohomology.
Such proofs were recently given respectively in \cite{medina2020cartan} and \cite{medina2021adem}, and we expect that the additional structure they unlock will also play an important role in computational topology.

\section{Conclusions and future work} \label{s:conclusion}

In this article we introduced new formulas describing cup-$i$ products on simplicial cochains over $\Ftwo$.
As proven in work being finalized \cite{medina2022axiomatic}, these formulas give raise to a cup-$i$ construction isomorphic to those introduced by Steenrod and others, but their specific form allowed us to development a fast algorithm computing Steenrod squares on the mod 2 cohomology of finite simplicial complexes.
Our method is based on the determination of the universal support of a representative of $Sq^k\big( [\alpha] \big)$ given the support of a cocycle $\alpha$, and it is therefore less impacted by the size of the simplicial complex than traditional methods that iterate over all simplices of dimension $\bars{\alpha} + k$.

In future work we will treat the general prime $p$ case.
More specifically, we will describe new formulas defining cup-$(p,i)$ products on simplicial cochains over $\Fp$.
These new formulas will lead to fast computations of mod $p$ Steenrod operations for simplicial complexes, and, as in the work of Cantero-Mor\'an \cite{cantero-moran2020khovanov} over $\Ftwo$, to the definition of Steenrod operations on Khovanov homology over $\Fp$ for a general prime $p$.

	\sloppy
	\printbibliography
\end{document}